\numberwithin{equation}{section}
\declaretheoremstyle[
  bodyfont=\normalfont\itshape,
  headformat=\NAME\ \NUMBER\NOTE,
]{myplain}
\declaretheoremstyle[
  headformat=\NAME\ \NUMBER\NOTE,
]{mydefinition}
\newcommand{\envqed}{{\lower-0.3ex\hbox{$\triangleleft$}}}
\declaretheorem[style=myplain,numberwithin=section]{theorem}
\declaretheorem[style=myplain,numberlike=theorem]{lemma}
\declaretheorem[style=mydefinition,numberlike=theorem,qed=\envqed]{remark}
\let\epsilon\varepsilon
\let\phi\varphi
\let\rho\varrho
\renewcommand{\O}{\mathcal{O}}
\newcommand{\bhat}{\widehat{b}}
\newcommand{\uhat}{\widehat{u}}
\NewDocumentCommand{\RK}{o m O{\the\numexpr#2-1\relax} m O{} O{} o}{%
  \IfValueTF{#1}{#1}{RK}%
  #2(#3)#4%
  \ifblank{#6}{}{\textsubscript{F}}%
  \ifblank{#5}{}{[#5]}%
  \IfValueT{#7}{#7}%
}
\newcommand{\dt}{\Delta t}
\renewcommand{\vec}[1]{\pmb{#1}}
\NewDocumentCommand{\opD}{m+g}{%
  \IfNoValueTF{#2}
    {D_{#1}}
    {D_{#1,#2}}%
}
\NewDocumentCommand{\opDsplit}{m+g}{%
  \IfNoValueTF{#2}
    {\widetilde{D}_{#1}}
    {\widetilde{D}_{#1,#2}}%
}
\NewDocumentCommand{\opM}{g}{%
  \IfNoValueTF{#1}
    {M}
    {M_{#1}}%
}
\NewDocumentCommand{\opQ}{g}{%
  \IfNoValueTF{#1}
    {Q}
    {Q_{#1}}%
}
\NewDocumentCommand{\opI}{g}{%
  \IfNoValueTF{#1}
    {I}
    {I_{#1}}%
}
\NewDocumentCommand{\opV}{g}{%
  \IfNoValueTF{#1}
    {V}
    {V_{#1}}%
}
\NewDocumentCommand{\opB}{g}{%
  \IfNoValueTF{#1}
    {B}
    {B_{#1}}%
}
\NewDocumentCommand{\opR}{g}{%
  \IfNoValueTF{#1}
    {R}
    {R_{#1}}%
}
\NewDocumentCommand{\opN}{m+g}{%
  \IfNoValueTF{#2}
    {N_{#1}}
    {N_{#1,#2}}%
}
\NewDocumentCommand{\fnum}{g}{%
  \IfNoValueTF{#1}
    {f^{\mathrm{num}}}
    {f^{\mathrm{num,#1}}}%
}
\NewDocumentCommand{\vecfnum}{g}{%
  \IfNoValueTF{#1}
    {\vec{f}^{\mathrm{num}}}
    {\vec{f}^{\mathrm{num,#1}}}%
}
\NewDocumentCommand{\vecfcorr}{g}{%
  \IfNoValueTF{#1}
    {\vec{f}^{\mathrm{corr}}}
    {\vec{f}^{\mathrm{corr,#1}}}%
}
\NewDocumentCommand{\fvol}{g}{%
  \IfNoValueTF{#1}
    {f^{\smash{\mathrm{vol}}}}
    {f^{\smash{\mathrm{vol,#1}}}}%
}
\newcommand{\orcid}[1]{ORCID:~\href{https://orcid.org/#1}{#1}}
\newenvironment{keywords}{\par\textbf{Key words.}}{\par}
\newenvironment{AMS}{\par\textbf{AMS subject classification.}}{\par}
\title{Step size control for explicit relaxation Runge-Kutta methods preserving invariants}
\author[1]{Sebastian Bleecke}
\affil[1]{Institute of Mathematics, Johannes Gutenberg University Mainz, Germany}
\author[1]{Hendrik~Ranocha\thanks{\orcid{0000-0002-3456-2277}}}
\date{November 23, 2023} %TODO: date
\begin{document}

\maketitle

\begin{abstract}
\noindent
  Many time-dependent differential equations are equipped with invariants.
Preserving such invariants under discretization can be important, e.g.,
to improve the qualitative and quantitative properties of numerical
solutions. Recently, relaxation methods have been proposed as small
modifications of standard time integration schemes guaranteeing the
correct evolution of functionals of the solution. Here, we investigate
how to combine these relaxation techniques with efficient step size
control mechanisms based on local error estimates for explicit
Runge-Kutta methods. We demonstrate our results in several numerical
experiments including ordinary and partial differential equations.

\end{abstract}

%TODO: keywords
\begin{keywords}
  explicit Runge-Kutta methods,
  step size control,
  PID controller,
  first-same-as-last (FSAL) technique,
  invariant conservation,
  relaxation
\end{keywords}

%TODO: MSC
\begin{AMS}
  65L06,  % NA, ODEs: Multistep, Runge-Kutta and extrapolation methods
  65M20,  % NA, PDEs, IVPs, IBVPs: Method of lines for initial value and initial-boundary value problems involving PDEs
  65M70   % NA, PDEs, IVPs, IBVPs: Spectral, collocation and related methods
\end{AMS}

\section{Introduction}
\label{sec:introduction}

Consider an ordinary differential equation (ODE)
\begin{equation}\label{eq:initial_val_prob}
	u'(t) = f(t, u(t)), \quad u(t^0) = u^0.
\end{equation}
We are interested in such initial value problems equipped with
an invariant $\eta$ called \emph{entropy} satisfying
$\forall t,u\colon \eta'(u) f(t,u) = 0$, i.e.,
\begin{equation}
	\frac{\dif}{\dif t} \eta(u(t)) = 0.
\end{equation}
It is often desirable to preserve the conservation of invariants
when solving the ODE numerically since this can lead both qualitative
and quantitative improvements. This line of research is well-known as
geometric numerical integration and discussed in textbooks such as
\cite{sanzserna1994numerical,hairer2006geometric}. For example, symplectic
Runge-Kutta methods are able to conserve quadratic functionals --- but are
fully implicit. In this article, we concentrate on the \emph{relaxation
method} investigated recently.
The basic ideas of relaxation methods date back to Sanz-Serna
\cite{sanzserna1982explicit} and have been refined since then, e.g.,
for inner product norms and the classical fourth-order Runge-Kutta
method by Dekker \& Verwer \cite[pp. 265-266]{dekker1984stability}
via the incremental direction technique of \cite{calvo2006preservation}
losing an order of accuracy to relaxation Runge-Kutta methods
\cite{ketcheson2019relaxation,ranocha2020relaxation}. The general theory
of \cite{ranocha2020general} broadens the scope of applications and
includes multistep methods, deferred correction and ADER schemes
\cite{abgrall2022relaxation,gaburro2023high}, IMEX methods
\cite{kang2022entropy,li2022implicit,li2023linearly}, and
multi-derivative schemes \cite{ranocha2023functional,ranocha2023multiderivative}.
Relaxation methods have also been extended to multiple invariants
\cite{biswas2023multiple,biswas2023accurate}. In this article, we consider
a single invariant $\eta$ and an explicit Runge-Kutta method given by
\cite{hairer2008solving,butcher2016numerical}
\begin{equation}
\label{eq:RK-baseline}
\begin{aligned}
	y^i &= u^n + \dt_n \sum_{j=1}^{i-1} f(t^n + c_j \dt_n, y^j),
	\\
	u^{n+1} &= u^n + \dt_n \sum_{i=1}^s b_i f(t^n + c_i \dt_n, y^i).
\end{aligned}
\end{equation}
In this case, the numerical solution $u^{n+1}$ of the baseline method
is modified to the relaxed solution
\begin{equation}
	u^{n+1}_\gamma = u^n + \gamma (u^{n+1} - u^n),
\end{equation}
where the scalar relaxation parameter $\gamma$ is determined in each
time step as the solution of the nonlinear scalar equation
\begin{equation}
	\eta(u^{n+1}_\gamma) = \eta(u^n).
\end{equation}
From here on, we continue the time integration with
$u^{n+1}_\gamma$ as approximation at the relaxed time
$t^{n+1}_\gamma = t^n + \gamma \dt_n$ instead of $u^{n+1}$
at time $t^{n+1} = t^n + \dt_n$.
The available theory yields \cite{ranocha2020relaxation,ranocha2020general}
\begin{theorem}
  Consider the relaxation procedure described above for a Runge-Kutta
  method of order $p \ge 2$ with exact value at time $t^{n}$,
  i.e., $u^{n+1} = u(t^{n+1}) + \O(\dt^{p+1})$.
  If the time step $\dt_n$ is sufficiently small and
  \begin{equation}
  \label{eq:entropy-non-degenerate}
    \eta'(u^{n+1}) \frac{u^{n+1} - u^{n}}{\| u^{n+1} - u^{n} \|}
    =
    c \dt + \O( \dt_n^{2} ),
    \quad
    \text{with } c \neq 0,
  \end{equation}
  there is a unique solution $\gamma = 1 + \O(\dt_n^{p-1})$ and the
  relaxation method satisfies
  $u^{n+1}_\gamma = u(t^{n+1}_\gamma) + \O(\dt_n^{p+1})$.
\end{theorem}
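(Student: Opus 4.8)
The plan is to reduce the problem to the analysis of a single scalar equation. Writing $\delta := u^{n+1} - u^n$ and defining the residual
\begin{equation}
  r(\gamma) := \eta(u^n + \gamma \delta) - \eta(u^n),
\end{equation}
the relaxation condition $\eta(u^{n+1}_\gamma) = \eta(u^n)$ is precisely $r(\gamma) = 0$. Since $\gamma = 0$ is the trivial root, the goal is to locate the nontrivial root near $\gamma = 1$ and to control its distance from $1$. First I would record the two basic magnitudes: consistency of the Runge-Kutta method gives $\|\delta\| = \O(\dt_n)$, and differentiation yields $r'(\gamma) = \eta'(u^n + \gamma\delta)\,\delta$, so that $r''(\gamma) = \delta^\top \eta''(u^n + \gamma\delta)\,\delta = \O(\dt_n^2)$.

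Next I would establish the two estimates that drive the argument. For the value at $\gamma = 1$, the hypothesis $u^n = u(t^n)$ together with $u^{n+1} = u(t^{n+1}) + \O(\dt_n^{p+1})$ and conservation of $\eta$ along the exact flow, $\eta(u(t^{n+1})) = \eta(u(t^n))$, gives after a Taylor expansion of $\eta$
\begin{equation}
  r(1) = \eta(u^{n+1}) - \eta(u^n) = \eta(u(t^{n+1})) - \eta(u(t^n)) + \O(\dt_n^{p+1}) = \O(\dt_n^{p+1}).
\end{equation}
For the slope at $\gamma = 1$, the non-degeneracy assumption \eqref{eq:entropy-non-degenerate} yields
\begin{equation}
  r'(1) = \eta'(u^{n+1})\,(u^{n+1}-u^n) = \|u^{n+1}-u^n\|\,\bigl(c\dt_n + \O(\dt_n^2)\bigr),
\end{equation}
which is of size $\O(\dt_n^2)$ and, crucially, bounded away from zero by a constant multiple of $\dt_n^2$ for all sufficiently small $\dt_n$, since $c \neq 0$.

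With these in hand, existence, uniqueness and the order estimate follow from a Newton / implicit-function argument. A Taylor expansion of $r$ about $\gamma = 1$ gives $0 = r(1+\xi) = r(1) + r'(1)\xi + \tfrac12 r''(\tilde\gamma)\xi^2$, and solving to leading order yields
\begin{equation}
  \gamma - 1 = \xi = -\frac{r(1)}{r'(1)} + \text{h.o.t.} = \O\bigl(\dt_n^{p-1}\bigr),
\end{equation}
the ratio $\O(\dt_n^{p+1})/\O(\dt_n^2)$. The main obstacle is that $r'(1)$ and $r''$ are \emph{both} of order $\dt_n^2$, so one cannot naively invoke a uniform lower bound on $r'$ to guarantee monotonicity. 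The resolution is that on the relevant interval of radius $\O(\dt_n^{p-1})$ the variation of the slope is $r''\cdot(\gamma-1) = \O(\dt_n^2)\cdot\O(\dt_n^{p-1}) = \O(\dt_n^{p+1})$, which for $p \ge 2$ is of strictly higher order than $r'(1) = \O(\dt_n^2)$ itself; hence $r'$ keeps its sign there, $r$ is strictly monotone, and the root is unique.

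Finally, for the accuracy claim I would expand both sides about $t^{n+1}$. Since $t^{n+1}_\gamma = t^{n+1} + (\gamma-1)\dt_n$ with $(\gamma-1)\dt_n = \O(\dt_n^p)$ and $(\gamma-1)^2\dt_n^2 = \O(\dt_n^{p+1})$, one has $u(t^{n+1}_\gamma) = u(t^{n+1}) + (\gamma-1)\dt_n\,u'(t^{n+1}) + \O(\dt_n^{p+1})$. On the numerical side, using $u^{n+1}-u^n = \dt_n u'(t^{n+1}) + \O(\dt_n^2)$,
\begin{equation}
  u^{n+1}_\gamma = u^{n+1} + (\gamma-1)(u^{n+1}-u^n) = u(t^{n+1}) + (\gamma-1)\dt_n\,u'(t^{n+1}) + \O(\dt_n^{p+1}).
\end{equation}
The two leading corrections coincide and cancel upon subtraction, giving $u^{n+1}_\gamma = u(t^{n+1}_\gamma) + \O(\dt_n^{p+1})$, i.e.\ order-$p$ accuracy at the relaxed time. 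This cancellation — that the relaxation direction $\delta$ agrees with $u'$ to leading order, so shifting the time argument and shifting along $\delta$ produce the same first-order term — is the conceptual heart of the result.
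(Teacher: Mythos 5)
Your argument is correct, and it is essentially the standard proof of this result. Note that the paper itself does not prove this theorem --- it is quoted from the cited references \cite{ketcheson2019relaxation,ranocha2020relaxation,ranocha2020general} --- but your reconstruction follows the same route as those works: reduce to the scalar residual $r(\gamma)=\eta(u^n+\gamma(u^{n+1}-u^n))-\eta(u^n)$, show $r(1)=\O(\dt_n^{p+1})$ from the local error and conservation of $\eta$ along the exact flow, show $r'(1)=\Theta(\dt_n^{2})$ from the non-degeneracy condition, and conclude $\gamma-1=\O(\dt_n^{p-1})$ by a Newton/implicit-function argument, with the final accuracy claim resting on the cancellation of the leading corrections $(\gamma-1)\dt_n\,u'$ on both sides. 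One small refinement: your two bounds $|r''|\le C\dt_n^{2}$ and $|r'(1)|\ge c_0\dt_n^{2}$ in fact give that $r'$ keeps its sign on a \emph{fixed} interval $[1-\epsilon_0,1+\epsilon_0]$ with $\epsilon_0=c_0/(2C)$ independent of $\dt_n$, so uniqueness holds in a $\dt_n$-independent neighborhood of $1$, not merely on the shrinking interval of radius $\O(\dt_n^{p-1})$; this is the form in which the cited works state it, and it also supplies the sign change of $r$ needed for existence. Your leading-order estimate $r'(1)=\|u^{n+1}-u^n\|\bigl(c\dt_n+\O(\dt_n^2)\bigr)\ge c_0\dt_n^2$ implicitly uses $\|u^{n+1}-u^n\|\gtrsim\dt_n$, i.e.\ $f(u^n)\neq 0$, which is worth stating but is implicit in the hypothesis \eqref{eq:entropy-non-degenerate} as well.
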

Thus, relaxation Runge-Kutta methods have at least the same order of
accuracy as the baseline methods and preserve the invariant entropy $\eta$.
The technical assumption \eqref{eq:entropy-non-degenerate} is a certain
non-degeneracy condition basically requiring that the entropy $\eta$ is
nonlinear enough, e.g., strictly convex; since relaxation methods conserve
all linear invariants, there cannot be a unique solution $\gamma$ for a
linear functional $\eta$.

To simplify the following notation, assume that the Runge-Kutta methods
satisfy the row sum condition
\begin{equation}
	c_i = \sum_{j} a_{ij}, \quad \forall i.
\end{equation}
Then, we can assume that the ODE is given as an autonomous system.

Relaxation methods have been demonstrated to be efficient and accurate
for several problems including Hamiltonian ODEs
\cite{ranocha2020relaxationHamiltonian,zhang2020highly} and PDEs
\cite{mitsotakis2021conservative,ranocha2021rate},
kinetic equations \cite{leibner2021new,pagliantini2021energy}, and
entropy-stable methods for compressible fluid flows
\cite{yan2020entropy,waruszewski2022entropy,ranocha2020fully}.
However, there is a problem when combining relaxation methods with
so-called FSAL (first-same-as-last) methods developed for error-based
step size control \cite{dormand1980family}, which can be written as
\begin{equation}
\label{eq:FSAL}
\begin{aligned}
  y^i &= u^n_\gamma + \dt_{n} \sum_{j=1}^{i-1} a_{ij} f(y^j), \\
  u^{n+1} &= u^n_\gamma + \dt_{n} \sum_{i=1}^s b_{i} f(y^i), \\
  \uhat^{n+1} &= u^n_\gamma + \dt_{n} \sum_{i=1}^s \bhat_{i} f(y^i) + \dt_{n} \bhat_{s+1} f(u^{n+1}).
\end{aligned}
\end{equation}
The embedded solution $\uhat^{n+1}$ is used to estimate the error in one
step and adapt the time step size accordingly. The inclusion of the term
$f(u^{n+1})$ enables the construction of better error estimators and does
not increase the cost of the baseline method if the step is accepted since
it has to be computed in the first stage of the next step anyway.

However, using relaxation leads to a reduced efficiency for FSAL methods
since the relaxation procedure is typically performed after computing
the embedded solution, resulting in the need to compute the right-hand
side of the relaxed solution at the beginning of the next time step
\cite{rogowski2022performance,jahdali2022performance}.
Our goal in this paper is to fix this inefficiency and develop new
approaches to combine relaxation with FSAL methods efficiently.
To do so, we first describe our new approaches in the following
Section~\ref{sec:basic-description}. Afterwards, we analyze the methods
in Section~\ref{sec:theory} and compare them in numerical experiments
in Section~\ref{sec:numerical_experiments}. Finally, we summarize and
dicsuss our results in Section~\ref{sec:summary}.

\section{Basic description of the new methods}
\label{sec:basic-description}

Before diving deeper into the theoretical details of our novel methods,
we summarize them briefly.
We start out by taking a closer look at the current procedure when working with existing software libraries.
Afterwards we motivate a modification of said procedure and present two new approaches.
Furthermore we will discuss possible obstacles and challenges along the way.

\subsection{Naive combination of step size control and relaxation}
\label{sec:naive}

To derive the new methods presented in this work, we first look at the current procedure
when using software packages such as OrdinaryDiffEq.jl \cite{rackauckas2017differentialequations} or SUNDIALS
\cite{gardner2022sundials,hindmarsh2005sundials}\footnote{%
Relaxation methods are available in recent versions of ARKODE
\cite{reynolds2023arkode} in SUNDIALS.}.
Given a baseline method, we first compute the stages
\begin{equation}
  y^i = u^n_\gamma + \dt_{n} \sum_{j=1}^{i-1} a_{ij} f(y^j),
\end{equation}
followed by the update of the main solution
\begin{equation}
  u^{n+1} = u^n_\gamma + \dt_{n} \sum_{i=1}^s b_{i} f(y^i).
\end{equation}
Next we compute the update
\begin{equation}
   \uhat^{n+1} = u^n_\gamma + \dt_{n} \sum_{i=1}^s \bhat_{i} f(y^i) + \dt_{n} \bhat_{s+1} f(u^{n+1})
\end{equation}
of the embedded method and use a PID controller
\cite{soderlind2006time,soderlind2006adaptive,kennedy2003additive}
to update the time step size $\dt$ and decide whether the step
should be accepted or rejected.

In a nutshell, a PID controller works as follows; see
\cite{soderlind2006time,soderlind2006adaptive,ranocha2023error}
for more details and extensions. Given the main and embedded updates,
we compute the weighted error estimate
\begin{equation}
	w_{n+1} = \left(
		\frac{1}{N}
		\sum_{i=1}^N \left(
			\frac{u_i^{n+1} - \uhat_i^{n+1}}
				 {\tau_a + \tau_r \max\{ |u_i^{n+1}|, |\uhat_i^{n+1}|\}}
		\right)^2
	\right)^\frac{1}{2},
\end{equation}
where $N$ is the number of components of $u$ and $\tau_a$/$\tau_r$ are
given absolute/relative tolerances. Then, we set
$\epsilon_{n+1} = 1 / w_{n+1}$ and compute the new time step size as
\begin{equation}
	\dt_{n+1}
	=
	\left(1 + \arctan\biggl(
			\epsilon_{n+1}^{\beta_1 / p}
			\epsilon_n^{\beta_2 / p}
			\epsilon_{n-1}^{\beta_3 / p} - 1
	\biggr)\right) \dt_n,
\end{equation}
using a step size limiter as suggested in \cite{soderlind2006adaptive}.
If the value multiplying $\dt_n$ is smaller than the default safety factor
$0.9^2 = 0.81$, the step is rejected an re-tried with the smaller time
step size predicted by the PID controller.

When the step is accepted, relaxation is performed by computing the
relaxation parameter $\gamma$ as root of the scalar equation
\begin{equation}
	\eta(u^{n + 1}_{\gamma}) = \eta(u^n_{\gamma}),
\end{equation}
where
\begin{equation}
	u^{n + 1}_{\gamma} = u^n_{\gamma} + \gamma (u^{n + 1} - u^n_{\gamma}).
\end{equation}
Afterwards, we adjust $u^{n + 1}$ and $t^{n + 1}$ to $u^{n + 1}_{\gamma}$ and
$t^{n + 1}_\gamma = t^n + \gamma \dt_n$, respectively.

However, we have to compute $f(u^{n + 1}_{\gamma})$ at the beginning
of the next step since we cannot reuse the value $f(u^{n+1})$ from the
FSAL stage.
In total this amounts to one additional right-hand side evaluation,
thus losing efficiency of the FSAL structure.
To keep its efficiency, we present two new approaches in the following
sections.

\subsection{FSAL-R}
To keep the efficiency of the FSAL structure, we first need to decide at which point relaxation should be performed.
Following the structure of the naive combination of FSAL and relaxation,
it seems natural to first perform step size control and
relaxation afterwards. Thus, the idea is to first compute the stages
\begin{equation}
  y^i = u^n_\gamma + \dt_{n} \sum_{j=1}^{i-1} a_{ij} f(y^j),
\end{equation}
the numerical solution
\begin{equation}
  u^{n+1} = u^n_\gamma + \dt_{n} \sum_{i=1}^s b_{i} f(y^i),
\end{equation}
and the embedded solution
\begin{equation}
   \uhat^{n+1} = u^n_\gamma + \dt_{n} \sum_{i=1}^s \bhat_{i} f(y^i) + \dt_{n} \bhat_{s+1} f(u^{n+1}).
\end{equation}
Using the same PID controller logic, we check if the step is rejected
and determine the new time step size.
Afterwards, relaxation is performed on the accepted update $u^{n + 1}$ yielding the solution
$u^{n + 1}_{\gamma}$ at time $t^{n + 1}_{\gamma}$.

Until now we followed the same procedure as the naive combination of FSAL and relaxation.
To save one additional right-hand side evaluation, we use an approximation
for the first right-hand side evaluation $f(u^{n + 1}_{\gamma})$ at the
beginning of the next step.
A simple solution is to approximate $f(u^{n + 1}_{\gamma})$ by
$f(u^{n + 1})$, thus using the old FSAL structure for this new method.
A more sophisticated way to approximate $f(u^{n + 1}_{\gamma})$ is to use
\begin{equation}
    f(u^{n+1}_\gamma) \approx f(u^{n}_\gamma) + \gamma \bigl( f(u^{n+1}) - f(u^{n}_\gamma) \bigr).
\end{equation}
This approximation can be motivated by taking the main result
\begin{equation}
	u^{n+1}_\gamma = u^{n}_\gamma + \gamma \bigl( u^{n+1} - u^{n}_\gamma \bigr)
\end{equation}
from \cite{ranocha2020general} and applying a sufficiently smooth function $f$ to both sides.
The accuracy of this approximation will be analyzed in
Section~\ref{FSAL-R_theory_section}.

The FSAL-R approach has the advantage of following the basic structure of existing
software libraries.
Additionally, relaxation is only performed if a step is accepted. Finally, the right-hand
side update $f(u^{n + 1}_{\gamma})$ is computed using only a linear combination of
values that are already computed in the process, thus saving one additional right-hand
side in comparison to the naive approach described in Section~\ref{sec:naive}.

\subsection{R-FSAL}
Besides performing relaxation after the step size control procedure, it is also
possible to perform relaxation in between the computation of a baseline step and the step size control.
We start off by computing the stages of our baseline method
\begin{equation}
  y^i = u^n_\gamma + \dt_{n} \sum_{j=1}^{i-1} a_{ij} f(y^j),
\end{equation}
followed by the update
\begin{equation}
  u^{n+1} = u^n_\gamma + \dt_{n} \sum_{i=1}^s b_{i} f(y^i).
\end{equation}
Instead of checking if the update $u^{n + 1}$ is sufficiently accurate one now performs relaxation yielding $u^{n + 1}_{\gamma}$ and $t^{n + 1}_{\gamma}$.
Afterwards, we compute the embedded method $\widehat{u}^{n + 1}$.
Since we deviated from the basic structure of the naive procedure some options present themselves when it comes to the computation of $\widehat{u}^{n + 1}$.
First we need to decide which value to substitute for the term $f(u^{n + 1})$ in
\begin{equation}
   \uhat^{n+1} = u^n_\gamma + \dt_{n} \sum_{i=1}^s \bhat_{i} f(y^i) + \dt_{n} \bhat_{s+1} f(u^{n+1}).
\end{equation}
Borrowing the approach from the FSAL-R method we can make a simple substitution by approximating $f(u^{n + 1})$ via $f(u^{n + 1}_{\gamma})$.
Note that the use of $f(u^{n + 1}_{\gamma})$ does not imply an additional right-hand side evaluation since the last stage is not used to calculate $u^{n + 1}$.
An additional option is to reverse the interpolation formula as well and
substitute
$f(u^n_\gamma) + \frac{1}{\gamma} (f(u^{n + 1}_{\gamma}) - f(u^n_\gamma))$ for $f(u^{n + 1})$.

Besides the question of approximating the last stage derivative $f(u^{n + 1})$, we are also faced
with the decision at which time step the embedded solution $\widehat{u}^{n + 1}$should be evaluated.
We can use the old time step $\Delta t_n$ or the relaxed time step $\gamma \Delta t_n$.

Finally there are some options for evaluation of the error itself, namely
\begin{enumerate}
	\item comparing $u^{n+1}$ and $\uhat^{n+1}$,

    \item comparing $u^{n+1}_\gamma$ and $\uhat^{n+1}_\gamma := u^n_\gamma + \gamma (\uhat^{n+1} - u^n_\gamma)$,

    \item comparing $u^{n+1}_\gamma$ and $\uhat^{n+1}$.
\end{enumerate}

We conclude that we keep the FSAL structure while not having to evaluate additional right-hand sides.
On the other hand, the R-FSAL method deviates from the standard procedures of existing software libraries.
Thus, an implementation requires more involved changes.
Another drawback of the R-FSAL method is that we need to perform relaxation regardless if the step is rejected or not.
However, it has the advantage that the first stage derivative is
calculated exactly, which can be helpful for dissipative problems
to get an entropy estimate \cite{ranocha2020relaxation,ranocha2020general}.

\section{Theoretical results}
\label{sec:theory}

This section covers the main theoretical results of our new methods FSAL-R and R-FSAL.
We discuss each of the options given in the previous section and investigate how it affects the accuracy.
\subsection{FSAL-R}\label{FSAL-R_theory_section}
When using the approximations described in the previous section we have to ensure
\begin{equation}
\begin{aligned}
	u(t^{n + 1}) - u^{n + 1} &= \O(\Delta t^{p + 1}),\\
	u(t^{n + 1}) - \widehat{u}^{n + 1} &= \O(\Delta t^p).
\end{aligned}
\end{equation}
In particular, we have to ensure that the approximation of the first stage
$k^1 = f(y^1) = f(u^{n}_{\gamma})$ does not reduce the accuracy.
We start off by stating the main result of this section.
\begin{theorem}\label{FSALR_main_result}
	Using the approximation
	$f(u^{n + 1}_{\gamma}) \approx f(u^n_\gamma) + \gamma (f(u^{n + 1}) - f(u^n_\gamma))$
	or the
	approximation $f(u^{n + 1}_{\gamma})  \approx f(u^{n + 1})$ ensures
	\begin{equation}
	\begin{aligned}
		u(t^{n + 1}) - u^{n + 1} &= \O(\Delta t^{p + 1})\\
		u(t^{n + 1}) - \widehat{u}^{n + 1} &= \O(\Delta t^p).
	\end{aligned}
	\end{equation}
\end{theorem}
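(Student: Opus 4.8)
The plan is to localise the analysis to the single place where FSAL-R differs from an ordinary explicit Runge--Kutta step started from $u^n_\gamma$: the first-stage derivative $k^1 = f(y^1) = f(u^n_\gamma)$, which is not recomputed but replaced by the approximation $\tilde k^1$ inherited from the previous step. Every other ingredient ($y^2,\dots,y^s$, the weights $b_i,\bhat_i$, and the genuine evaluation $f(u^{n+1})$ entering the embedded formula) is computed exactly. Under the usual localising assumption that $u^n_\gamma$ lies on the exact trajectory, the baseline pair already satisfies $u(t^{n+1}) - u^{n+1} = \O(\Delta t^{p+1})$ and $u(t^{n+1}) - \uhat^{n+1} = \O(\Delta t^{p})$ whenever $k^1$ is exact. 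It therefore suffices to estimate the approximation error $\epsilon := \tilde k^1 - f(u^n_\gamma)$ and to track how it propagates into $u^{n+1}$ and $\uhat^{n+1}$.

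First I would bound $\epsilon$ for each choice by Taylor expanding $f$ about $u^n_\gamma$, writing $d := u^{n+1} - u^n_\gamma = \O(\Delta t)$ and using the relaxation identity $u^{n+1}_\gamma = u^n_\gamma + \gamma d$ together with $\gamma = 1 + \O(\Delta t^{p-1})$ from the first theorem. For the plain choice $\tilde k^1 = f(u^{n+1})$, expanding both $f(u^{n+1}_\gamma)$ and $f(u^{n+1})$ gives $\epsilon = f'(u^n_\gamma)(\gamma - 1)\,d + \O(\Delta t^{p+1})$, whose leading term is $\O(\Delta t^{p})$ thanks to the factor $(\gamma - 1) = \O(\Delta t^{p-1})$. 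For the interpolated choice $\tilde k^1 = f(u^n_\gamma) + \gamma\bigl(f(u^{n+1}) - f(u^n_\gamma)\bigr)$ the first-order terms cancel exactly, and the quadratic terms combine with coefficient $\tfrac12(\gamma^2 - \gamma) = \tfrac12\gamma(\gamma - 1)$, so that $\epsilon = \tfrac12\gamma(\gamma - 1)\,f''(u^n_\gamma)(d,d) + \O(\Delta t^{p+2}) = \O(\Delta t^{p+1})$. Hence $\epsilon = \O(\Delta t^{p})$ in the plain case and $\epsilon = \O(\Delta t^{p+1})$ in the interpolated case.

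Next I would propagate $\epsilon$ through the step. Because every stage derivative enters the updates with a prefactor $\Delta t$, the perturbation of $k^1$ reaches $u^{n+1}$ and $\uhat^{n+1}$ directly through the terms $\Delta t\, b_1 \epsilon$ and $\Delta t\, \bhat_1 \epsilon$, and only indirectly --- hence at one order higher --- through the later stages $y^i$ (and through the $f(u^{n+1})$ term of the embedded formula), whose dependence on $k^1$ is itself $\O(\Delta t)$. Consequently the total contribution of the approximation is $\O(\Delta t\,\epsilon)$ in both updates. With $\epsilon = \O(\Delta t^{p})$ this adds $\O(\Delta t^{p+1})$ to the main solution and $\O(\Delta t^{p+1})$ to the embedded one; combined with the baseline errors this leaves $u(t^{n+1}) - u^{n+1} = \O(\Delta t^{p+1})$ and $u(t^{n+1}) - \uhat^{n+1} = \O(\Delta t^{p})$ intact, and the interpolated variant is even one order safer.

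The main obstacle is the Taylor bookkeeping of the second step: one must confirm that for the interpolation formula the $\O(\Delta t^p)$ contribution genuinely cancels, so that $\epsilon$ retains the full factor $(\gamma - 1)$, and that the surviving quadratic term is correctly of order $\Delta t^{p+1}$. A secondary subtlety, which I would address briefly, is that the interpolation formula reuses the (possibly itself approximated) value $f(u^n_\gamma)$ of the current step; since this reused value appears with the coefficient $(1 - \gamma) = \O(\Delta t^{p-1})$, any error it carries enters only at order $\O(\Delta t^{2p})$ and does not accumulate, so the local estimates above also hold when the scheme is iterated.
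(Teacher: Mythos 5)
Your proposal is correct and follows essentially the same route as the paper: isolate the perturbation of the first-stage derivative, bound it by a Taylor expansion exploiting $\gamma - 1 = \O(\Delta t^{p-1})$ (this is exactly the content of the paper's Lemmas on the simple and interpolated approximations, which yield $\O(\Delta t^{p})$ and $\O(\Delta t^{p+1})$ respectively), and then propagate it into the update through the factor $\Delta t$ while comparing against the naive combination whose local accuracy is known. If anything, you are slightly more careful than the paper in two spots --- explicitly noting that the perturbation reaches the later stages $y^i$ only at one order higher, and that the reused, itself-approximated value $f(u^n_\gamma)$ enters the interpolation with the small coefficient $1-\gamma$ --- but these are refinements of the same argument rather than a different one.
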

\begin{remark}
	We will prove that the simple approximation $f(u^{n + 1}_{\gamma})  \approx f(u^{n + 1})$ is of
	order $\O(\Delta t^p)$, while the interpolation approximation $f(u^{n + 1}_{\gamma}) \approx  f(u^n) + \gamma (f(u^{n + 1}) - f(u^n))$ is of order $\O(\Delta t^{p + 1})$. This results in the interpolation approximation introducing additional errors of one order higher ($\O(\Delta t^{p + 2})$) than the error of the baseline method. In contrast the simple approximation will introduce errors of the same order ($\O(\Delta t^p)$) as the baseline method. Both results are proven below.
\end{remark}
In order to prove the main result (Theorem \ref{FSALR_main_result}),
we need some preliminary results about the accuracy of the approximations we propose. In a first step we prove
\begin{lemma}\label{lemma:naive_approx_FSAL-R}
	A locally Lipschitz continuous function $g$ satisfies
	\begin{equation}
		g(u^{n + 1}_{\gamma}) = g(u^{n + 1}) + \O(\Delta t^p).
	\end{equation}
\end{lemma}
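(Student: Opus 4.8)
The plan is to bound the difference $g(u^{n+1}_\gamma) - g(u^{n+1})$ using local Lipschitz continuity of $g$, reducing the problem to estimating $\| u^{n+1}_\gamma - u^{n+1} \|$. By definition of the relaxed update we have $u^{n+1}_\gamma = u^n_\gamma + \gamma(u^{n+1} - u^n_\gamma)$, so
\begin{equation}
  u^{n+1}_\gamma - u^{n+1} = (\gamma - 1)(u^{n+1} - u^n_\gamma).
\end{equation}
Thus the whole estimate hinges on controlling the two factors on the right: the deviation $(\gamma - 1)$ of the relaxation parameter from $1$, and the size of the increment $u^{n+1} - u^n_\gamma$.

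\textbf{First} I would invoke the existing relaxation theory quoted in the introductory theorem, which guarantees (under the non-degeneracy condition and for sufficiently small $\dt$) a unique relaxation parameter satisfying $\gamma = 1 + \O(\dt^{p-1})$. \textbf{Second}, since $u^{n+1} - u^n_\gamma = \dt_n \sum_{i=1}^s b_i f(y^i)$ is a single Runge-Kutta increment over one step, it is of size $\O(\dt)$ (the right-hand side values $f(y^i)$ are bounded on the relevant compact set, and each term carries one factor of $\dt_n$). Combining these two facts gives
\begin{equation}
  \| u^{n+1}_\gamma - u^{n+1} \|
  = |\gamma - 1| \, \| u^{n+1} - u^n_\gamma \|
  = \O(\dt^{p-1}) \cdot \O(\dt)
  = \O(\dt^{p}).
\end{equation}
\textbf{Finally}, applying the local Lipschitz bound $\| g(u^{n+1}_\gamma) - g(u^{n+1}) \| \le L \, \| u^{n+1}_\gamma - u^{n+1} \|$ — valid because both arguments lie in a fixed neighborhood of the exact solution once $\dt$ is small — yields $g(u^{n+1}_\gamma) = g(u^{n+1}) + \O(\dt^p)$, which is the claim.

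\textbf{The main obstacle} I anticipate is not any single estimate but the bookkeeping needed to make the Lipschitz constant uniform across time steps: one must argue that $u^{n+1}$ and $u^{n+1}_\gamma$ remain in a common compact neighborhood on which $g$ is Lipschitz with a single constant $L$, independent of $n$. This is where the "sufficiently small $\dt$" hypothesis and the convergence of the underlying method are quietly used. A secondary point worth stating carefully is that $\gamma$ stays bounded (indeed close to $1$), so that $u^{n+1}_\gamma$ does not wander outside the neighborhood; this again follows from $\gamma = 1 + \O(\dt^{p-1})$. Everything else is a direct chaining of the order estimates above, so I would keep those steps brief.
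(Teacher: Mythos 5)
Your argument is correct and coincides with the paper's own proof: both factor $u^{n+1}_\gamma - u^{n+1} = (\gamma-1)(u^{n+1} - u^n_\gamma)$, use $\gamma = 1 + \O(\dt^{p-1})$ together with the $\O(\dt)$ size of the Runge--Kutta increment, and conclude via the local Lipschitz bound. Your additional remarks on the uniformity of the Lipschitz constant are a sensible elaboration of what the paper leaves implicit, but the route is the same.
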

\begin{proof}
	From the Lipschitz condition we obtain (up to a local Lipschitz constant on the right-hand side)
	\begin{equation}
	\begin{aligned}
		\|g(u^{n + 1}_{\gamma}) - g(u^{n + 1})\| \lesssim& \|u^{n + 1}_{\gamma} - u^{n + 1}\|\\
		=& \|u^n_\gamma + \gamma (u^{n + 1} - u^{n}_\gamma) - u^{n + 1}\|\\
		=& \underbrace{|\gamma - 1|}_{= \O(\Delta t^{p - 1})}
		\underbrace{\|u^{n + 1} - u^{n}_\gamma\|}_{= \O(\Delta t)} = \O(\Delta t^p),
	\end{aligned}
	\end{equation}
	which yields the claim.
\end{proof}
Next up we prove the accuracy of the interpolation approximation
$f(u^{n + 1}_{\gamma}) \approx f(u^n_\gamma) + \gamma (f(u^{n + 1}) - f(u^{n}_\gamma))$
by stating
\begin{lemma}\label{lemma:interpolation_approx_FSAL-R}
	A two times continuously differentiable function $g \in \mathcal{C}^2$ satisfies
	\begin{equation}
		g(u^{n + 1}_{\gamma}) = g(u^{n}_\gamma) + \gamma (g(u^{n + 1}) - g(u^n_\gamma))
		+ \O(\Delta t^{p + 1}).
	\end{equation}
\end{lemma}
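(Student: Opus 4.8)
The plan is to recognise the proposed right-hand side as the \emph{linear interpolant} of $g$ along the segment joining $u^n_\gamma$ and $u^{n+1}$, evaluated at the relaxation parameter $\gamma$, and then to bound the resulting interpolation error. Concretely, I would introduce the scalar function
\begin{equation}
  \phi(\theta) = g\bigl( u^n_\gamma + \theta (u^{n+1} - u^n_\gamma) \bigr),
  \quad \theta \in \R,
\end{equation}
so that $\phi(\gamma) = g(u^{n+1}_\gamma)$ by the relaxation identity $u^{n+1}_\gamma = u^n_\gamma + \gamma(u^{n+1} - u^n_\gamma)$, while the linear interpolant of $\phi$ through the nodes $\theta = 0$ and $\theta = 1$ is $P(\theta) = (1-\theta)\phi(0) + \theta\phi(1)$. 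Evaluating at $\theta = \gamma$ gives exactly $P(\gamma) = g(u^n_\gamma) + \gamma\bigl(g(u^{n+1}) - g(u^n_\gamma)\bigr)$, i.e.\ the proposed approximation. Hence the claim reduces to showing $\phi(\gamma) - P(\gamma) = \O(\Delta t^{p+1})$.

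Next I would invoke the standard remainder formula for linear interpolation of a $\mathcal{C}^2$ function,
\begin{equation}
  \phi(\theta) - P(\theta) = \tfrac{1}{2}\, \phi''(\xi)\, \theta(\theta - 1),
\end{equation}
with $\xi$ in the smallest interval containing $0$, $1$ and $\theta$. Two estimates then close the argument. First, since $g \in \mathcal{C}^2$ and $\phi''(\theta) = (u^{n+1} - u^n_\gamma)^\top D^2 g(\cdot)\,(u^{n+1} - u^n_\gamma)$, we get $\phi''(\xi) = \O(\|u^{n+1} - u^n_\gamma\|^2) = \O(\Delta t^2)$, using $\|u^{n+1} - u^n_\gamma\| = \O(\Delta t)$ as in the previous lemma. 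Second, the main relaxation theorem gives $\gamma = 1 + \O(\Delta t^{p-1})$, whence $\gamma(\gamma - 1) = \O(\Delta t^{p-1})$. Multiplying the two bounds yields $\phi(\gamma) - P(\gamma) = \O(\Delta t^2)\,\O(\Delta t^{p-1}) = \O(\Delta t^{p+1})$, as desired.

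The only genuinely delicate point, and the one I expect to be the main obstacle, is that $\gamma$ need not lie in $[0,1]$: it equals $1 + \O(\Delta t^{p-1})$ and may sit slightly beyond the node $\theta = 1$, whereas the mean-value remainder above is classically stated for $\theta$ inside the node interval. I would resolve this either by noting that $\xi$ stays in a fixed compact neighbourhood of $1$ on which $D^2 g$ is bounded (legitimate because $\gamma \to 1$ and the iterates converge as $\Delta t \to 0$), or by sidestepping the node-dependent $\xi$ altogether via a direct second-order Taylor expansion of $g$ about $u^{n+1}$. In the latter route one writes $u^{n+1}_\gamma = u^{n+1} + (\gamma-1)(u^{n+1} - u^n_\gamma)$, expands both sides to first order, and checks that the only surviving mismatch is $(\gamma-1)\bigl[g'(u^{n+1}) - g'(u^n_\gamma)\bigr](u^{n+1} - u^n_\gamma)$, which is $\O(\Delta t^{p-1})\cdot\O(\Delta t)\cdot\O(\Delta t) = \O(\Delta t^{p+1})$ by the local Lipschitz continuity of $g'$ guaranteed by the $\mathcal{C}^2$ hypothesis; the quadratic remainders are $\O(\Delta t^{2p}) \subseteq \O(\Delta t^{p+1})$ since $p \ge 1$.
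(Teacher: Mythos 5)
Your argument is correct, but it takes a genuinely different route from the paper's. You package the claim as a one-dimensional linear interpolation/extrapolation problem for $\phi(\theta) = g\bigl(u^n_\gamma + \theta(u^{n+1}-u^n_\gamma)\bigr)$ with nodes $\theta=0,1$ and invoke the Newton-form remainder $\tfrac{1}{2}\phi''(\xi)\,\theta(\theta-1)$, so the bound falls out as the product of $\phi''(\xi)=\O(\|u^{n+1}-u^n_\gamma\|^2)=\O(\dt^2)$ with $\gamma(\gamma-1)=\O(\dt^{p-1})$. The paper instead Taylor-expands $g$ to first order about $u^{n+1}$, evaluates that expansion at both $u^{n+1}_\gamma$ and $u^n_\gamma$, and tracks the two remainders by hand: the one at $u^{n+1}_\gamma$ is $\O(\|u^{n+1}_\gamma-u^{n+1}\|^2)=\O(\dt^{2p})$, while the one at $u^n_\gamma$ enters multiplied by $(1-\gamma)$, giving the same $\O(\dt^{p-1})\cdot\O(\dt^2)=\O(\dt^{p+1})$ product you obtain --- indeed, your fallback argument in the final paragraph is essentially the paper's proof. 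Your primary version is more compact and makes the source of the $\O(\dt^{p+1})$ term structurally transparent (the factor $\theta(\theta-1)$ is exactly what degenerates as $\gamma\to 1$); the paper's version avoids quoting the interpolation remainder theorem and stays entirely within multivariate Taylor expansions. You also explicitly flag and resolve the extrapolation subtlety that $\gamma$ need not lie in $[0,1]$, which the paper passes over silently; your resolution --- the evaluation points remain in a fixed compact set on which $D^2 g$ is bounded, since $\gamma = 1 + \O(\dt^{p-1})$ --- is the right one.
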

\begin{proof}
	First we approximate the function $g$ via a first order Taylor approximation around the
	point $u^{n + 1}$; this yields
	\begin{equation}\label{eq:taylor_approx_g}
	\begin{aligned}
		g(u) = g(u^{n + 1}) + g^\prime(u^{n + 1}) (u - u^{n + 1}) + \O(\|u - u^{n + 1}\|^2).
	\end{aligned}
	\end{equation}
	Using this Taylor approximation we evaluate $g$ at the point $u^{n + 1}_{\gamma}$; this
	results in
	\begin{equation}
	\begin{aligned}
		g(u^{n + 1}_{\gamma}) =& g(u^{n + 1}) + g^\prime(u^{n + 1}) (u^{n + 1}_{\gamma} - u^{n + 1})
								+ \O(\|u^{n + 1}_{\gamma} - u^{n + 1}\|^2)\\
							  =& g(u^{n + 1}) + (\gamma - 1) g^\prime(u^{n + 1}) (u^{n + 1} - u^n_\gamma)
								+ \O(\|u^{n +1}_{\gamma} - u^{n + 1}\|^2),
	\end{aligned}
	\end{equation}
	where the error consists of terms of the order $\O(\Delta t^{2p})$ since
	\begin{equation}
		\|u^{n + 1}_{\gamma} - u^{n + 1}\| = |\gamma - 1| \|u^{n + 1}  - u^{n}_\gamma\| = \O(\Delta t^p).
	\end{equation}
	Inserting a term $\gamma g(u^{n + 1})$ leads to
	\begin{equation}
	\begin{aligned}
		g(u^{n + 1}_{\gamma}) =& g(u^{n + 1}) + (\gamma - 1) g^\prime(u^{n + 1}) (u^{n + 1} - u^n_\gamma)
								+ \gamma g(u^{n + 1}) - \gamma g(u^{n + 1})
								+ \O(\Delta t^{2p})\\
							  =& g(u^{n + 1}) + g^\prime(u^{n + 1}) (u^n_\gamma - u^{n + 1})
                              \\
                              &
							  + \gamma \left(g(u^{n + 1}) - \bigl(g(u^{n + 1}) + g^\prime(u^{n + 1})
							  (u^n_\gamma - u^{n + 1})\bigr)\right)
							  + \O(\Delta t^{2p}).
	\end{aligned}
	\end{equation}
	Reusing the Taylor approximation of $g$ we derived in equation \eqref{eq:taylor_approx_g} we observe that
	 we just have two instances of a Taylor approximation of $g(u^n)$. For further manipulation we define
	 \begin{equation}
	 	e^g_{u^n_\gamma} := g(u^{n + 1}) + g^\prime(u^{n + 1})(u^n_\gamma - u^{n + 1}) - g(u^n_\gamma)
	 \end{equation}
	 as the error of approximating $g(u^n_\gamma)$ by the Taylor approximation given by equation
	 \eqref{eq:taylor_approx_g}, which results in
	 \begin{equation}
	 \begin{aligned}
	 	g(u^{n + 1}_{\gamma}) =& g(u^{n}_\gamma) + e^g_{u^n} + \gamma \left(g(u^{n + 1}) - g(u^n_\gamma)
	 							- e^g_{u^n_\gamma}\right) + \O(\Delta t^{2p})\\
	 						  =& g(u^{n}_\gamma) + \gamma \left(g(u^{n + 1}) - g(u^n_\gamma)\right)
	 						  + (1 - \gamma) e^{g}_{u^n_\gamma} + \O(\Delta t^{2p}).
	 \end{aligned}
	 \end{equation}
	 From equation \eqref{eq:taylor_approx_g} we deduce
	 \begin{equation}
		e^{g}_{u^n_\gamma} = \O(\|u^{n}_{\gamma} - u^{n + 1}\|^2) = \O(\Delta t^2),
	 \end{equation}
	 which finally amounts to
	 \begin{equation}
	 	g(u^{n + 1}_{\gamma}) = g(u^{n}_\gamma) + \gamma \left(g(u^{n + 1}) - g(u^n_\gamma)\right)  + \O(\Delta t^{p + 1}).
	 \end{equation}
\end{proof}
We now prove the main result of this section.
\begin{proof}[Proof of Theorem \ref{FSALR_main_result}]
	We first prove the accuracy of our methods when using the simple approximation approach.
	This means we substitute $f(u^{n})$ into our first stage, where $u^n$ describes the $n$-th solution update without
	the application of relaxation.
	We will denote this approximated first stage via $k^1 = f(u^{n})$.
To describe the regular first stage we will use $k^1_{\gamma} = f(u^n_{\gamma})$, since we used relaxation to update the last solution step.
Furthermore we will use the shorthand $k^i = f(y^i)$ for the stage derivatives of the baseline method.
Finally to distinguish between all the different methods we will
write $\tilde{u}^{n +1}$ for the numerical solution obtained by
the naive combination of FSAL and relaxation as described in
Section~\ref{sec:naive}. We call the method using a simple
approximation in the first stage $u^{n + 1}$.

	We have to prove that our new method satisfies the accuracy conditions
	\begin{equation}\label{eq:proof_main_result_FSALR_conditions}
	\begin{aligned}
		u(t^{n + 1}) - u^{n + 1} &= \O(\Delta t^{p + 1})\\
		u(t^{n + 1}) - \widehat{u}^{n + 1} &= \O(\Delta t^p).
	\end{aligned}
	\end{equation}
	Due to previous work, we know that the naive combination of
	relaxation and step size control described in
	Section~\ref{sec:naive} already satisfies these two conditions,
	see \cite{ranocha2020general}.
	With this in mind, we first concentrate on the simple approximation
	$f(u^{n}_{\gamma}) \approx f(u^{n})$ and observe
	\begin{equation}
	\begin{aligned}
		u(t^{n + 1}) - u^{n + 1} =& u(t^{n + 1}) - u^n_\gamma - \Delta t  \left( b_1 k^1 +
									\sum_{i = 2}^{s} b_i k^i \right)\\
								 =& u(t^{n + 1}) - u^n_\gamma - \Delta t  \left( b_1 f(u^{n}_{\gamma})
								    + \O(\Delta t^p) + \sum_{i = 2}^{s} b_i k^i \right)\\
								 =& u(t^{n + 1}) - u^n_\gamma - \Delta t  \left( b_1 k^1_{\gamma}
								 + \sum_{i = 2}^{s} b_i k^i \right) + \O(\Delta t^{p + 1})\\
								 =& u(t^{n + 1}) - \tilde{u}^{n + 1} + \O(\Delta t^{p + 1})
								 = \O(\Delta t^{p + 1}),
	\end{aligned}
	\end{equation}
	where we have used the result of Lemma \ref{lemma:naive_approx_FSAL-R}. Furthermore the last equation
	follows from the fact that the naive combination satisfies
	\eqref{eq:proof_main_result_FSALR_conditions} by default.
	The same arguments yield for the embedded method
	\begin{equation}
	\begin{aligned}
		u(t^{n + 1}) - \widehat{u}^{n + 1} =& u(t^{n + 1}) - u^n_\gamma - \Delta t  \left( \widehat{b}_1
											  k^1 + \sum_{i = 2}^{s + 1} \widehat{b}_i k^i  \right)\\
										   =& u(t^{n + 1}) - u^n_\gamma - \Delta t  \left( \widehat{b}_1
											  k^1_{\gamma} + \O(\Delta t^p)
											  + \sum_{i = 2}^{s + 1} \widehat{b}_i k^i  \right)\\
										   =& u(t^{n + 1}) - \widehat{\tilde{u}}^{n + 1}
										      + \O(\Delta t^{p + 1}) = \O(\Delta t^{p}),
	\end{aligned}
	\end{equation}
	since the naive combination yields
	$u(t^{n + 1}) - \widehat{\tilde{u}}^{n + 1} = \O(\Delta t^{p})$ for the embedded method.

In a second step we look at the FSAL-R method when using an interpolation
approximation
$f(u^{n}_{\gamma})\approx f(u^{n - 1}_\gamma)+\gamma(f(u^{n})-f(u^{n - 1}_\gamma))$ for the
first stage.
We will denote this approximative stage with $k^1 = f(u^{n - 1}_\gamma) + \gamma (f(u^{n}) - f(u^{n - 1}_\gamma))$, while
	$k^1_{\gamma}$ will denote the unapproximated first stage again. Just as in the previous case, $u^{n + 1}$ will describe the update using the
	approximative stage $k^1$, while $\tilde{u}^{n + 1}$ will describe the update one would get when using a
	naive combination of FSAL and relaxation as described in
	Section~\ref{sec:naive}. For the main method this results in
	\begin{equation}
	\begin{aligned}
		u(t^{n + 1}) - u^{n + 1} =& u(t^{n + 1}) - u^n_\gamma - \Delta t \left( b_1 k^1 + \sum_{i = 2}^{s} b_i
								    k^i\right)\\
								 =& u(t^{n + 1}) - u^n_\gamma - \Delta t \left( b_1 f(u^{n}_{\gamma})
								    + \O(\Delta t^{p + 1}) + \sum_{i = 2}^{s} b_i k^i\right)\\
								 =& u(t^{n + 1}) - \tilde{u}^{n + 1} + \O(\Delta t^{p + 2})
								 = \O(\Delta t^{p + 1}),
	\end{aligned}
	\end{equation}
	where we have used the result of Lemma \ref{lemma:interpolation_approx_FSAL-R}.
	A similar argument yields for the embedded method
	\begin{equation}
	\begin{aligned}
		u(t^{n + 1}) - \widehat{u}^{n + 1} =& u(t^{n + 1}) - u^n_\gamma - \Delta t  \left( \widehat{b}_1
											  k^1 + \sum_{i = 2}^{s + 1} \widehat{b}_i k^i  \right)\\
										   =& u(t^{n + 1}) - u^n_\gamma - \Delta t  \left( \widehat{b}_1
											  k^1_{\gamma} + \O(\Delta t^{p + 1})
											  + \sum_{i = 2}^{s + 1} \widehat{b}_i k^i  \right)\\
										   =& u(t^{n + 1}) - \widehat{\tilde{u}}^{n + 1}_{\gamma}
										      + \O(\Delta t^{p + 2}) = \O(\Delta t^{p}),
	\end{aligned}
	\end{equation}
	thus proving the accuracy required by equation \eqref{eq:proof_main_result_FSALR_conditions}.
\end{proof}
\subsection{R-FSAL}
\label{sec:analysis-rfsal}

Apart from discussing different approximations of $f(u^{n + 1})$ in the embedded method, we also
look at different choices of the time chosen for the embedded method $\widehat{u}^{n + 1}$.
Despite the most natural option $t^{n + 1}_{\gamma} = t^n + \gamma \Delta t_n$, since $u^{n + 1}_{\gamma}$ is an
approximation at that time step, we also discuss the evaluation of the embedded method $\widehat{u}^{n + 1}$ at the time $t^{n + 1} = t^n + \Delta t_n$.
\begin{remark}
	Evaluating the embedded method $\widehat{u}^{n + 1}$ at the time $t^{n + 1}$ has the obvious drawback,
	that time dependent problems $u^\prime = f(t,u)$ are not equivalent anymore to the autonomous problem with
	$t^\prime = 1$ as an auxiliary equation.
\end{remark}

To make things more accessible we outline all the cases we will consider regarding approximations and time
step usage:

\begin{enumerate}
	\item No factor $\gamma$ for the embedded method and no interpolation for the FSAL stage, which results in
	\begin{equation}
	\begin{aligned}
		\widehat{u}^{n + 1} = u^n_{\gamma} + \Delta t \sum_{i = 1}^{s} \widehat{b}_i k^i + \Delta t
		 \widehat{b}_{s + 1} f(u^{n + 1}_{\gamma}).
	\end{aligned}
	\end{equation}

	\item No factor $\gamma$ for the embedded method, but we use the interpolation for the FSAL stage,
	which yields
	\begin{equation}
	\begin{aligned}
		\widehat{u}^{n + 1} = u^n_{\gamma} + \Delta t \sum_{i = 1}^{s} \widehat{b}_i k^i + \Delta t
		 \widehat{b}_{s + 1} \left(f(u^n_\gamma) + \frac{1}{\gamma}(f(u^{n + 1}_{\gamma}) - f(u^n_\gamma))\right).
	\end{aligned}
	\end{equation}

	\item Use the factor $\gamma$ for the embedded method, but no interpolation for the FSAL stage.
	This results in
	\begin{equation}
	\begin{aligned}
		\begin{aligned}
		\widehat{u}^{n + 1} = u^n_{\gamma} + \gamma \dt \sum_{i = 1}^{s} \widehat{b}_i k^i +
		\gamma \dt \widehat{b}_{s + 1} f(u^{n + 1}_{\gamma}).
		\end{aligned}
	\end{aligned}
	\end{equation}

	\item Use the factor $\gamma$ for the embedded method and the interpolation for the FSAL stage,
	which amounts to
	\begin{equation}
	\begin{aligned}
		\widehat{u}^{n + 1} = u^n_{\gamma} + \gamma \dt \sum_{i = 1}^{s} \widehat{b}_i k^i
		+ \gamma \dt \widehat{b}_{s + 1} \left(f(u^n_\gamma) + \frac{1}{\gamma}(f(u^{n + 1}_{\gamma}) - f(u^n_\gamma))\right).
	\end{aligned}
	\end{equation}
\end{enumerate}

In order to guarantee the accuracy of our method we have to ensure
\begin{equation}
\begin{aligned}
	u(t^{n + 1}_{\gamma}) - u^{n + 1}_{\gamma} =& \mathcal{O}(\Delta t^{p + 1}),\\
	u(t^{n + 1}_{\gamma}) - \widehat{u}^{n + 1}_{\gamma} =& \mathcal{O}(\Delta t^p),
\end{aligned}
\end{equation}
where $\widehat{u}^{n + 1}_{\gamma}$ denotes one of the variants of approximation we discussed earlier. Note that the first condition is already satisfied via the earlier results on relaxation, thus we only have to prove the second condition.
Before attending the accuracy proofs regarding the cases above we first prove a preliminary result in
\begin{lemma}\label{lemma:interpolation_formula_RFSAL}
	A two times continuously differentiable function $g \in \mathcal{C}^2$ admits to following approximation
	\begin{equation}
		g(u^{n + 1}) = g(u^n_\gamma) + \frac{1}{\gamma} \left(g(u^{n + 1}_{\gamma}) - g(u^n_\gamma)\right)
		+ \O(\Delta t^{p + 1}).
	\end{equation}
\end{lemma}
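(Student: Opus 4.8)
The plan is to obtain this reverse interpolation formula by algebraically inverting the forward formula already established in Lemma~\ref{lemma:interpolation_approx_FSAL-R}, rather than by redoing the Taylor analysis. That lemma states, for any $g \in \mathcal{C}^2$,
\[
  g(u^{n + 1}_{\gamma}) = g(u^{n}_\gamma) + \gamma \bigl(g(u^{n + 1}) - g(u^n_\gamma)\bigr) + \O(\Delta t^{p + 1}).
\]
First I would subtract $g(u^n_\gamma)$ from both sides and divide by $\gamma$, which gives
\[
  g(u^{n + 1}) = g(u^n_\gamma) + \frac{1}{\gamma}\bigl(g(u^{n + 1}_{\gamma}) - g(u^n_\gamma)\bigr) - \frac{1}{\gamma}\,\O(\Delta t^{p + 1}).
\]
Thus the whole claim reduces to showing that the prefactor $1/\gamma$ does not degrade the error order of the remainder term.

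The only thing to verify is therefore that $1/\gamma$ stays bounded as $\Delta t \to 0$. By the relaxation existence result quoted in the introduction we have $\gamma = 1 + \O(\Delta t^{p-1})$, so for sufficiently small $\Delta t$ the parameter $\gamma$ is bounded away from zero and $1/\gamma = 1 + \O(\Delta t^{p-1})$ is bounded. Consequently $\frac{1}{\gamma}\,\O(\Delta t^{p + 1}) = \O(\Delta t^{p + 1})$, and the displayed identity collapses exactly to the assertion of the lemma. This is essentially the entire argument; it is short precisely because the hard Taylor estimate has already been done once in Lemma~\ref{lemma:interpolation_approx_FSAL-R}.

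Should one prefer a self-contained route that does not invoke the forward formula, I would instead mirror the proof of Lemma~\ref{lemma:interpolation_approx_FSAL-R} verbatim, interchanging the roles of $u^{n+1}$ and $u^{n+1}_\gamma$ and replacing $\gamma$ by $1/\gamma$. Concretely, I would Taylor-expand $g$ to first order about $u^{n+1}_\gamma$, evaluate at $u^{n+1}$, and use the exact relation $u^{n+1} = u^n_\gamma + \frac{1}{\gamma}\bigl(u^{n+1}_\gamma - u^n_\gamma\bigr)$ together with $u^{n+1} - u^{n+1}_\gamma = \bigl(\frac{1}{\gamma} - 1\bigr)\bigl(u^{n+1}_\gamma - u^n_\gamma\bigr) = \O(\Delta t^p)$, after which inserting a $\frac{1}{\gamma}g(u^{n+1}_\gamma)$ term and re-expanding $g(u^n_\gamma)$ proceeds identically to the earlier proof. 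In either route the one point that requires care is the order bookkeeping around $\gamma$: one must keep in mind that $\gamma - 1$ is only $\O(\Delta t^{p-1})$, so the displacement $\|u^{n+1} - u^{n+1}_\gamma\|$ is $\O(\Delta t^{p})$ and the quadratic Taylor remainder is $\O(\Delta t^{2p})$, which is absorbed into $\O(\Delta t^{p+1})$ precisely because $p \ge 2$. I expect this order tracking to be the main (and essentially the only) subtlety; everything else is routine.
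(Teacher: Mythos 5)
Your proposal is correct and follows essentially the same route as the paper: the paper likewise inverts the forward formula of Lemma~\ref{lemma:interpolation_approx_FSAL-R} and then uses $\gamma = 1 + \O(\Delta t^{p-1})$, hence $1/\gamma = 1 + \O(\Delta t^{p-1})$, to show that the prefactor $1/\gamma$ does not degrade the $\O(\Delta t^{p+1})$ remainder. Your order bookkeeping matches the paper's argument, so no further changes are needed.
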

\begin{proof}
	We use the result about the interpolation formula
	\begin{equation}
		g(u^{n + 1}_{\gamma}) = g(u^n_\gamma) + \gamma \left(g(u^{n + 1}) - g(u^n_\gamma)\right) + \O(\Delta t^{p + 1})
	\end{equation}
	used in the theory section about the FSAL-R method, stated in Lemma \ref{lemma:interpolation_approx_FSAL-R}. Inserting this in the
	following expression yields
	\begin{equation}
	\begin{aligned}
		 & g(u^n_\gamma) + \frac{1}{\gamma} \left(g(u^{n + 1}_{\gamma}) - g(u^n_\gamma)\right) - g(u^{n + 1})\\
		=& g(u^n_\gamma) +  \frac{1}{\gamma} \left(g(u^n_\gamma) + \gamma \left(g(u^{n + 1}) - g(u^n_\gamma)\right)
		   + \O(\Delta t^{p + 1}) - g(u^n_\gamma)\right) - g(u^{n + 1})\\
		=& \frac{1}{\gamma} \O(\Delta t^{p + 1}).
	\end{aligned}
	\end{equation}
	Note that the function $\frac{1}{1 + x}$ has the first order Taylor approximation
	\begin{equation}
		\frac{1}{1 + x} = 1 + x + \O(x^2)
	\end{equation}
	 at $0$. Inserting $\gamma = 1 + \O(\Delta t^{p - 1})$ into this approximation results in
	 \begin{equation}
	 \frac{1}{\gamma} = 1 + \O(\Delta t^{p - 1}).
	 \end{equation}
	 Combining those two observations leads to
	 \begin{equation}
	 \begin{aligned}
	 	&g(u^n_\gamma) + \frac{1}{\gamma} \left(g(u^{n + 1}_{\gamma}) - g(u^n_\gamma)\right) - g(u^{n + 1})\\
	 	=& \frac{1}{\gamma} \O(\Delta t^{p + 1}) = (1 + \O(\Delta t^{p - 1})) \O(\Delta t^{p + 1})
	 	= \O(\Delta t^{p + 1}),
	 \end{aligned}
	 \end{equation}
	 thus proving the claim.
\end{proof}
Finally we come to the main result of this section and state
\begin{theorem}
	All of the four approximations for the embedded method
	$\widehat{u}^{n + 1}$ outlined above yield the
	approximation result
	\begin{equation}
		u(t^{n + 1}_{\gamma}) - \widehat{u}^{n + 1} = \O(\Delta t^p).
	\end{equation}
\end{theorem}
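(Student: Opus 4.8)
The plan is to reduce all four variants to the naive combination of FSAL and relaxation, whose embedded method $\widehat{\tilde{u}}^{n+1} = u^n_\gamma + \Delta t \sum_{i=1}^s \widehat{b}_i k^i + \Delta t \widehat{b}_{s+1} f(u^{n+1})$ is already known to satisfy $u(t^{n+1}) - \widehat{\tilde{u}}^{n+1} = \O(\Delta t^p)$ by the earlier theory \cite{ranocha2020general}. The strategy is to show that each of the four $\widehat{u}^{n+1}$ differs from this reference by at most $\O(\Delta t^p)$ and then to transfer the comparison point from $t^{n+1}$ to $t^{n+1}_\gamma$. For the latter, I would use $t^{n+1}_\gamma - t^{n+1} = (\gamma - 1)\Delta t$ together with $\gamma = 1 + \O(\Delta t^{p-1})$ and the smoothness of $u$ to conclude $u(t^{n+1}_\gamma) - u(t^{n+1}) = \O(\Delta t^p)$, so that $u(t^{n+1}_\gamma) - \widehat{\tilde{u}}^{n+1} = \O(\Delta t^p)$. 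Since the embedded method only needs a local error of order $\O(\Delta t^p)$, perturbations of this size are admissible — this is the crucial point exploited throughout.

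First I would treat the two variants without the prefactor $\gamma$ (Cases 1 and 2), which differ from the reference only in the approximation used for the last-stage derivative $f(u^{n+1})$. In Case 1 one substitutes $f(u^{n+1}_\gamma)$, and Lemma~\ref{lemma:naive_approx_FSAL-R} gives $f(u^{n+1}_\gamma) = f(u^{n+1}) + \O(\Delta t^p)$; multiplied by the factor $\Delta t \widehat{b}_{s+1}$ this produces a discrepancy of $\O(\Delta t^{p+1})$, so $\widehat{u}^{n+1} = \widehat{\tilde{u}}^{n+1} + \O(\Delta t^{p+1})$. In Case 2 one substitutes $f(u^n_\gamma) + \tfrac{1}{\gamma}(f(u^{n+1}_\gamma) - f(u^n_\gamma))$, and Lemma~\ref{lemma:interpolation_formula_RFSAL} (applied with $g = f$) shows that this equals $f(u^{n+1}) + \O(\Delta t^{p+1})$, so the discrepancy is even smaller, namely $\O(\Delta t^{p+2})$. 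In both cases the claim then follows immediately from the reduction above.

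The main subtlety, and the step I expect to require the most care, is the prefactor $\gamma$ appearing in Cases 3 and 4, since the $\Delta t$ in front of every stage sum is replaced by $\gamma \Delta t$. Here I would write $\gamma \Delta t = \Delta t + (\gamma - 1)\Delta t$ and split the embedded update accordingly: the part scaled by $\Delta t$ reproduces Cases 1 and 2 (up to the already-controlled $\O(\Delta t^{p+1})$ or $\O(\Delta t^{p+2})$ terms), while the remaining part is $(\gamma-1)\Delta t (\sum_{i=1}^s \widehat{b}_i k^i + \widehat{b}_{s+1} f(\cdots))$. Because each $k^i = f(y^i)$ and the last-stage value are $\O(1)$ while the weights $\widehat{b}_i$ are fixed constants, the bracket is $\O(1)$, so this extra contribution is $(\gamma-1)\Delta t \cdot \O(1) = \O(\Delta t^{p-1})\cdot\Delta t = \O(\Delta t^p)$. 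Unlike in Cases 1 and 2 this perturbation is only of order $\O(\Delta t^p)$ rather than higher, and the essential observation is that it still lies within the error budget of an embedded method of order $p-1$; it would not suffice were one trying to certify order $p+1$. Collecting the estimates gives $\widehat{u}^{n+1} = \widehat{\tilde{u}}^{n+1} + \O(\Delta t^p)$ in all four cases, and combined with $u(t^{n+1}_\gamma) - \widehat{\tilde{u}}^{n+1} = \O(\Delta t^p)$ this yields $u(t^{n+1}_\gamma) - \widehat{u}^{n+1} = \O(\Delta t^p)$, as claimed.
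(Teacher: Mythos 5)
Your proposal is correct and follows essentially the same route as the paper: reduce each variant to the naive embedded update $\widehat{\tilde{u}}^{n+1}$ using Lemma~\ref{lemma:naive_approx_FSAL-R} for the plain substitution, Lemma~\ref{lemma:interpolation_formula_RFSAL} for the interpolated FSAL stage, and $\gamma\Delta t = \Delta t + \O(\Delta t^{p})$ for the prefactor, then shift the comparison point via $u(t^{n+1}_\gamma) = u(t^{n+1}) + \O(\Delta t^p)$. The only difference is organizational --- you group the cases by the presence of the $\gamma$ prefactor rather than treating all four sequentially --- and your explicit remark that the $\O(\Delta t^p)$ perturbation from the prefactor exhausts the error budget (so the argument would not certify order $p+1$) is a correct observation the paper leaves implicit.
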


\begin{proof}
	We prove the cases separately and use again the notation
	$\widehat{\tilde{u}}^{n+1}$ to denote the embedded solution obtained
	by the naive combination of relaxation and FSAL described in
	Section~\ref{sec:naive}.
	\begin{enumerate}
		\item
		First we note that since $u$ is a $\mathcal{C}^1$ solution of our initial value problem it is locally
		Lipschitz continuous. Therefore we can approximate $u(t^{n + 1}_{\gamma})$ by $u(t^{n + 1})$ via
		\begin{equation}
			u(t^{n + 1}_{\gamma}) = u(t^{n + 1}) + \O(\Delta t^p)
		\end{equation}
		using the same arguments as in Lemma \ref{lemma:naive_approx_FSAL-R} since
		$t^{n+1}_\gamma - t^{n+1} = (\gamma - 1) (t^{n+1} - t^n_\gamma) = \O(\dt^p)$.
		The same Lemma~\ref{lemma:naive_approx_FSAL-R} yields
		\begin{equation}
		\begin{aligned}
			u(t^{n + 1}_{\gamma}) - \widehat{u}^{n + 1}
			=&
			u(t^{n + 1}_{\gamma}) - u^n_\gamma - \Delta t \left(\sum_{i = 1}^{s} \widehat{b}_i k^i + \widehat{b}_{s + 1}
			f(u^{n + 1}_\gamma)\right)\\
			=&
			u(t^{n + 1}_\gamma) - u^n_\gamma  - \Delta t \left(\sum_{i = 1}^{s} \widehat{b}_i k^i +
			\widehat{b}_{s + 1} f(u^{n + 1}) + \O(\Delta t^p)\right)\\
			=&
			u(t^{n + 1}_{\gamma}) - \widehat{\tilde{u}}^{n + 1} + \O(\Delta t^{p + 1})\\
			=& u(t^{n + 1}) - \widehat{\tilde{u}}^{n + 1} + \O(\Delta t^p) = \O(\Delta t^p),
		\end{aligned}
		\end{equation}
		where we used $u(t^{n + 1}) - \widehat{\tilde{u}}^{n + 1} = \O(\Delta t^p)$ which is just the given accuracy
		of our baseline embedded method.

		\item
		With the result from Lemma \ref{lemma:interpolation_formula_RFSAL} we obtain
		\begin{equation}
		\begin{aligned}
			&u(t^{n + 1}_{\gamma}) - \widehat{u}^{n + 1}
            \\
			=&
			u(t^{n + 1}_{\gamma}) - u^n_\gamma - \Delta t \left(\sum_{i = 1}^{s} \widehat{b}_i k^i + \widehat{b}_{s + 1}
			\left(f(u^n) + \frac{1}{\gamma}(f(u^{n + 1}_{\gamma}) - f(u^n))\right)\right)\\
			=&
			u(t^{n + 1}_{\gamma}) - u^n_\gamma -\Delta t \left(\sum_{i = 1}^{s} \widehat{b}_i k^i + \widehat{b}_{s + 1}
			f(u^{n + 1}) + \O(\Delta t^{p + 1})\right)\\
			=&
			u(t^{n + 1}) - \widehat{\tilde{u}}^{n + 1} + \O(\Delta t^p) + \O(\Delta t^{p + 2}) = \O(\Delta t^p).
		\end{aligned}
		\end{equation}

		\item
		Observe that
		\begin{equation}
		\begin{aligned}
			&u(t^{n + 1}_{\gamma}) - \widehat{u}^{n + 1}
            \\
            =& u(t^{n + 1}_\gamma) - u^n_\gamma -
			\gamma \dt \left(\sum_{i = 1}^{s} \widehat{b}_i k^i + \widehat{b}_{s + 1}
			f(u^{n + 1}_\gamma)\right)\\
			=& u(t^{n + 1}_{\gamma}) - u^n_\gamma - (\Delta t + \mathcal{O}(\Delta t^{p}))
			\left(\sum_{i = 1}^{s} \widehat{b}_i k^i + \widehat{b}_{s + 1} f(u^{n + 1})
			+ \mathcal{O}(\Delta t^p)\right)
		\end{aligned}
		\end{equation}
		holds, where we used $\gamma \dt = \bigl(1 + \O(\dt^{p-1}) \bigr) \dt = \dt + \O(\dt^p)$.
		Thus,
		\begin{equation}
		\begin{aligned}
  			u(t^{n + 1}_{\gamma}) - \widehat{u}^{n + 1}
			=& u(t^{n + 1}_{\gamma}) - u^n_\gamma - \Delta t \left( \sum_{i = 1}^{s} \widehat{b}_i k^i + \widehat{b}_{s + 1} 			f(u^{n + 1}) \right) + \mathcal{O}(\Delta t^p)\\
			=& u(t^{n + 1}_{\gamma}) - \widehat{\tilde{u}}^{n + 1} + \mathcal{O}(\Delta t^p)\\
			=& u(t^{n + 1}) - \widehat{\tilde{u}}^{n + 1} + \mathcal{O}(\Delta t^p) = \O(\Delta t^p),
		\end{aligned}
		\end{equation}
		where we have used the accuracy of our baseline embedded method
		$u(t^{n + 1}) - \widehat{\tilde{u}}^{n + 1} = \mathcal{O}(\Delta t^p)$.

		\item
		Finally we observe
		\begin{equation}
		\begin{aligned}
			&u(t^{n + 1}_{\gamma}) - \widehat{u}^{n + 1}
            \\
			=&
			u(t^{n + 1}_{\gamma}) - u^{n}_\gamma - \gamma \dt  \left(\sum_{i = 1}^{s} \widehat{b}_i k^i +
			\widehat{b}_{s + 1}	\left(f(u^n_\gamma) + \frac{1}{\gamma}(f(u^{n + 1}_{\gamma})
			- f(u^n_\gamma))\right)\right)\\
			=&
			u(t^{n + 1}_{\gamma}) - u^{n}_\gamma - \gamma \dt  \left(\sum_{i = 1}^{s} \widehat{b}_i k^i +
			\widehat{b}_{s + 1}	f(u^{n + 1}) + \O(\Delta t^{p + 1}) \right)\\
			=&
			u(t^{n + 1}) - \widehat{\tilde{u}}^{n + 1} +\O(\Delta t^{p + 2})\\
			=&
			u(t^{n + 1}) - \widehat{\tilde{u}}^{n + 1} +\O(\Delta t^{p}) = \O(\Delta t^p),
		\end{aligned}
		\end{equation}
		where the term $\O(\Delta t^p)$ comes from the accuracy of our embedded baseline method.
	\end{enumerate}
\end{proof}

\section{Numerical experiments}
\label{sec:numerical_experiments}
In this section, we study the newly developed algorithms numerically. First,
we introduce some general test problems. Next, we study the different variants
of the classes FSAL-R and R-FSAL separately to single out the most promising candidates.
Concerning the FSAL-R we want to verify that our methods converge with a
desired order since we approximate the first stage of the baseline Runge-Kutta method.
Afterwards, we compare the performance between R-FSAL, FSAL-R methods
and the naive combination of step size control and relaxation as well as the baseline
method without relaxation.
To conduct our numerical experiments, we primarily use two baseline methods.
The first method is BS3, the third-order scheme of Bogacki and Shampine \cite{bogacki1989a32}.
The second one is DP5, the fifth-order method of Dormand and Prince \cite{dormand1980family}.

We have implemented all numerical methods in Julia
\cite{bezanson2017julia}. To verify our implementation, we compared it
to the schemes available from OrdinaryDiffEq.jl
\cite{rackauckas2017differentialequations} --- which we also used to
compute reference solutions numerically if necessary.
We applied the algorithm of \cite{alefeld1995algorithm} implemented
in the Julia package Roots.jl \cite{verzani202roots} to compute the
relaxation parameter $\gamma$.
For PDE discretizations, we used the package SummationByPartsOperators.jl
\cite{ranocha2021sbp}, which calls FFTW.jl \cite{frigo2005design}
for Fourier discretizations.
We used Matplotlib \cite{hunter2007matplotlib} (wrapped in the Julia
package PyPlot.jl) to create the figures.
The source code to reproduce all numerical experiments is available
online \cite{bleecke2023stepRepro}.

\subsection{Test problems}
\subsubsection{Harmonic oscillator}
We start off with a simple example given by
\begin{equation}\label{eq:harm_osc}
\begin{aligned}
	&u_1^\prime(t) = - u_2(t),\\
	&u_2^\prime(t) = u_1(t),\\
	&u_1(0) = 1, u_2(0) = 0,
\end{aligned}
\end{equation}
which can be used to describe a simple oscillation in a physical process. Furthermore the system \eqref{eq:harm_osc} conserves the following entropy functional
\begin{equation}
	\eta(u) = \eta(u_1, u_2) = \|u\|^2 = u_1^2 + u_2^2.
\end{equation}
The analytical solution of \eqref{eq:harm_osc} is given by
\begin{equation}
\begin{aligned}
	u_1(t) = \cos(t),\\
	u_2(t) = \sin(t).
\end{aligned}
\end{equation}
The next few test problems add more complexity to the simple oscillation problem.

\subsubsection{Nonlinear oscillator}
As a next step, we consider a nonlinear problem. Here, we follow
\cite{ranocha2021strong,ketcheson2023computing} and consider
\begin{equation}
\begin{aligned}
	&\frac{\dif}{\dif t} u(t) =
	\begin{pmatrix}
		u_1(t)\\
		u_2(t)\\
	\end{pmatrix}
	= \|u\|^{-2}
	\begin{pmatrix}
	-u_2(t)\\
	u_1(t)\\
	\end{pmatrix},\\
	&u(0) =
	\begin{pmatrix}
		1\\
		0\\
	\end{pmatrix},
\end{aligned}
\end{equation}
with the entropy functional
\begin{equation}
	\eta(u) = \|u\|^2
\end{equation}
and the analytical solution
\begin{equation}
\begin{aligned}
	u_1(t) = \cos(t),\\
	u_2(t) = \sin(t),
\end{aligned}
\end{equation}
which are identical with the entropy functional and the analytical solution of the harmonic oscillator.
\subsubsection{Nonlinear pendulum}
Another way of modifying the harmonic oscillator test problem in a nonlinear way is to reverse the modelling process of the harmonic osclillator from a mechanical point of view and consider the original form of the dynamical description of an idealized pendulum leading towards the following system of ordinary differential equations
\begin{equation}
\begin{aligned}
	u_1^\prime(t) &= - \sin(u_2(t)),\\
	u_2^\prime(t) &= u_1(t).
\end{aligned}
\end{equation}
This problem has the invariant
\begin{equation}
	\eta(u) = \frac{u_1^2}{2} - \cos(u_2).
\end{equation}
While this system has an analytical solution based on elliptical
integrals, we will just calculate a reference solution with the
Verner method of order 9 \cite{verner2010numerically}.

\subsubsection{Time dependent harmonic oscillator with bounded angular velocity}
As discussed in the introduction there might be some problems for the R-FSAL procedure when considering error estimators $\widehat{u}^{n + 1}$ which are based on the time step $\Delta t$ instead of $\gamma \Delta t$,
specifically for time dependent-problems.
Therefore we modify our harmonic oscillator example by multiplying a term $\omega(t)$ to both equations of the system.
In this case we choose $\omega(t) = 1 + \frac{1}{2} \sin(t)$ as angular velocity. This results in
\begin{equation}
\begin{aligned}
	&u_1^\prime(t) = -\left(1 + \frac{1}{2} \sin(t)\right) u_2(t),\\
	&u_2^\prime(t) = \left(1 + \frac{1}{2}\sin(t)\right) u_1(t),\\
	&u_1(0) = 1, u_2(0) = 0.
\end{aligned}
\end{equation}
which still conserves the entropy
\begin{equation}
	\eta(u) = \|u\|^2
\end{equation}
while having an analytical solution of the form
\begin{equation}
\begin{aligned}
	&u_1(t) = \cos(1/2)  \cos\left(t - \frac{1}{2} \cos(t)\right) - \sin(1/2)  \sin\left(t - \frac{1}{2} \cos(t)\right),\\
	&u_2(t) = \sin(1/2)  \cos\left(t - \frac{1}{2} \cos(t)\right) + \cos(1/2)  \sin\left(t - \frac{1}{2} \cos(t)\right).
\end{aligned}
\end{equation}

\subsubsection{Conserved exponential entropy}
The oscillator problems described above have a special structure.
In particular, there is a superconvergence phenomenon when applying
relaxation \cite{ranocha2020relaxationHamiltonian}.
This leads to an approximation of order $p + 1$ with a baseline method of order $p$, if $p$ is odd.
To check the generally expected order of convergence numerically, we
consider a test problem used previously in \cite{ranocha2020fully}. The exponential entropy
\begin{equation}
\eta(u) = \eta(u_1, u_2) = \exp(u_1) + \exp(u_2)
\end{equation}
is conserved by solutions of the ODE
\begin{equation}
\begin{aligned}
	&u_1^\prime = -\exp(u_2),\\
	&u_2^\prime = \exp(u_1),\\
	&u_1(0) = 1, u_2 = \frac{1}{2},
\end{aligned}
\end{equation}
which has the analytical solution
\begin{equation}
\begin{aligned}
	&u_1(t) = \log(\exp(1) + \exp(3/2)) - \log(\exp(0.5) + \exp((\exp(0.5) + \exp(1)) t))\\
	&u_2(t) = \frac{\log((\exp((\exp(0.5) + \exp(1)) t)  (\exp(0.5) + \exp(1)))}{\exp(0.5) + \exp((\exp(0.5) + \exp(1)) t))}.
\end{aligned}
\end{equation}
\subsubsection{Linear advection}
To finish off the numerical experiments we consider some PDE examples. We start off with the linear advection equation
\begin{equation}\label{eq:lin_adv}
\begin{aligned}
	&\partial_t u + \partial_x u = 0 \text{ on } (0,L) \subset \mathbb{R},\\
	&u(0,x) = \exp\left(\sin\left(\frac{2\pi x}{L}\right)\right),
\end{aligned}
\end{equation}
with periodic boundary conditions for time $t \in [0, 100]$
with $L = 2$.
The analytical solution of the linear advection equation can be easily verified as
\begin{equation}
	u(t,x) = \exp\left(\sin\left(\frac{2 \pi}{L} (x -t)\right)\right).
\end{equation}
We consider the quadratic invariant
\begin{equation}\label{eq:invariant_lin_adv}
	\eta(u) = \frac{1}{2} \int |u|^2 = \frac{1}{2} \|u\|_{L^2},
\end{equation}
where $u$ denotes the solution of \eqref{eq:lin_adv}.
We will use a discontinuous Galerkin (DG) semidiscretization in space to reduce the PDE to a system of ODEs.
Specifically, we use the DG spectral element method (DGSEM) with Gauss-Lobatto-Legendre collocation
on 8 uniformly distributed elements with polynomials of degree 5
\cite{kopriva2009implementing}. To make the discretization energy-conservative,
we use the central numerical flux at all interfaces.
To mimic a conserved quantity for the semidiscrete ODE, we use a quadrature rule on the integral \eqref{eq:invariant_lin_adv}
where the nodes and the weights of the quadrature rule are provided by the DGSEM semidiscretization.

\subsubsection{BBM equation}
As a more complex PDE example, we consider the Benjamin-Bona-Mahony (BBM) equation \cite{benjamin1972model}
\begin{equation}
\begin{aligned}
	(I - \partial_x^2)\partial_t u(t,x) + \partial_x \frac{u(t,x)^2}{2} + \partial_x u(t,x) &= 0, &&t \in (0, T),
	x \in (x_{min}, x_{max})\\
	u(0,x) &= u^0(x), &&x \in [x_{min}, x_{max}],
\end{aligned}
\end{equation}
with periodic boundary conditions as considered in \cite{ranocha2021broad}.
We choose the specific values to be
\begin{equation}
\begin{gathered}
	x_{min} = 0,
	\quad
	x_{max} = 2,
	\quad
	c = \frac{12}{10},
	\quad
	A = 3(c - 1),
	\quad
	K = \frac{1}{2} \sqrt{1 - \frac{1}{c}},
\end{gathered}
\end{equation}
which produces
\begin{equation}
\begin{aligned}
	u(t,x) = \frac{A}{\cosh(K ( x - c \ t))^2}
\end{aligned}
\end{equation}
as an analytical solution of the PDE above.
The BBM equation does in fact conserve multiple quantities namely
\cite{olver1979euler}
\begin{equation}
\begin{aligned}
	&J_1^{BBM}(u) = \int_{x_{min}}^{x_{max}} u,\\
	&J_2^{BBM}(u) = \frac{1}{2} \int_{x_{min}}^{x_{max}} \left(u^2 + \left(\partial_x u\right)^2\right),\\
	&J_3^{BBM}(u) = \int_{x_{min}}^{x_{max}} (u + 1)^3.
\end{aligned}
\end{equation}
We will primarily focus to conserve either the quadratic or the cubic
invariant with a Fourier collocation
semidiscretization which will automatically conserve the linear
invariant as well \cite{ranocha2021broad,linders2023resolving}.

\subsection{Comparing FSAL-R estimator options}
Since we approximate the first stage of our Runge-Kuttta methods we want to confirm through
numerical experiments that for the considered test problems the FSAL-R method does
indeed converge with desired order. We only consider ODE problems in this case.

\begin{figure}[htbp]
\centering
\begin{subfigure}{0.8\textwidth}
  \centering
    \includegraphics[width=\textwidth]{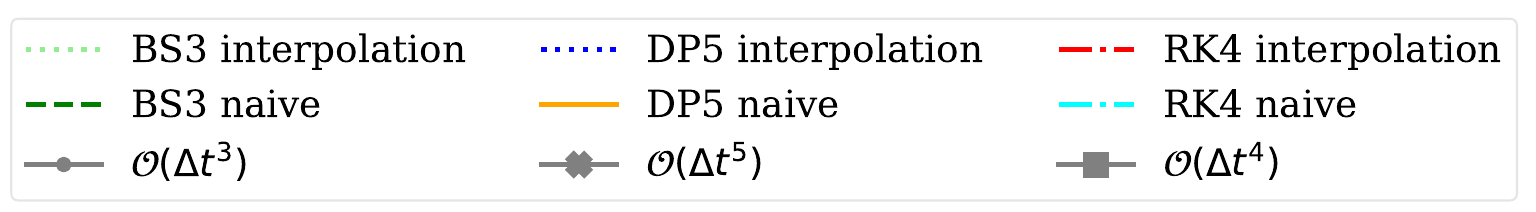}
\end{subfigure}%
\\
\begin{subfigure}{0.48\textwidth}
  \centering
  \includegraphics[width=\textwidth]{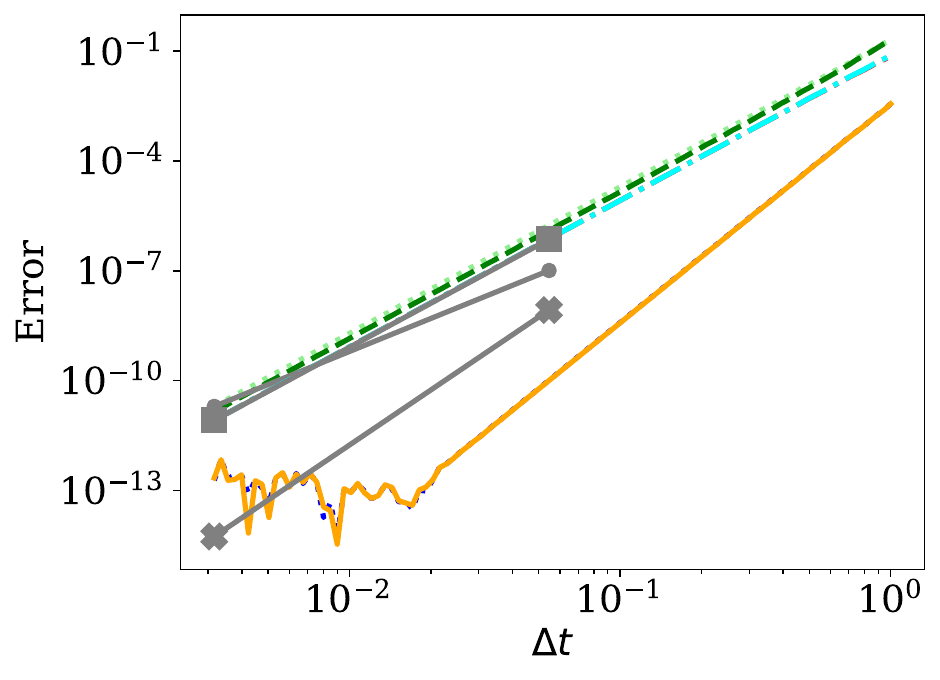}
  \caption{Harmonic oscillator.}
\end{subfigure}%
\hspace{\fill}
\begin{subfigure}{0.48\textwidth}
  \centering
  \includegraphics[width=\textwidth]{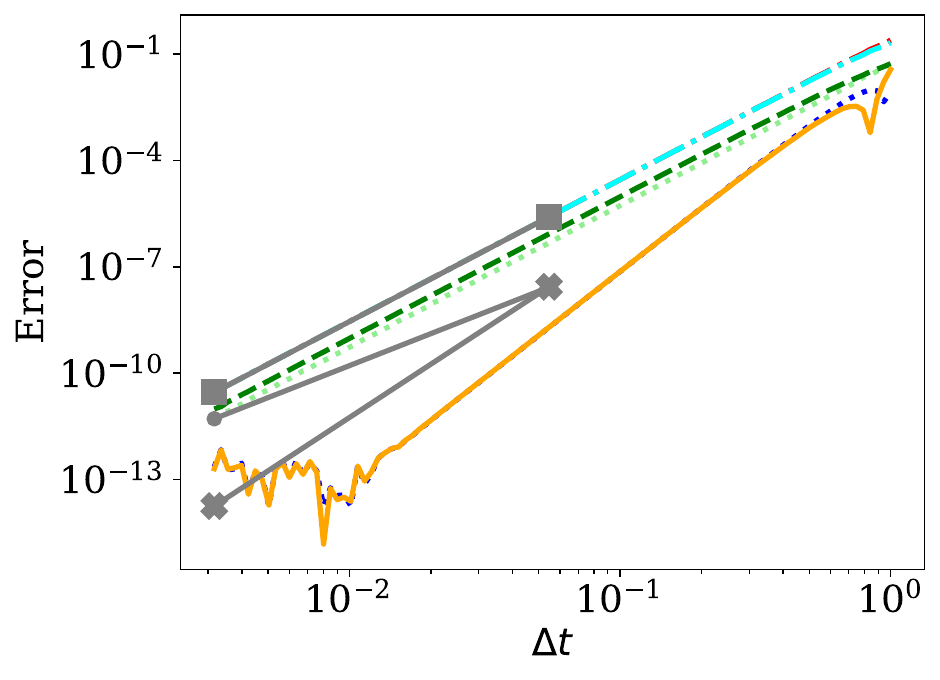}
  \caption{Non-linear oscillator.}
\end{subfigure}%
\\
\begin{subfigure}{0.48\textwidth}
  \centering
  \includegraphics[width=\textwidth]{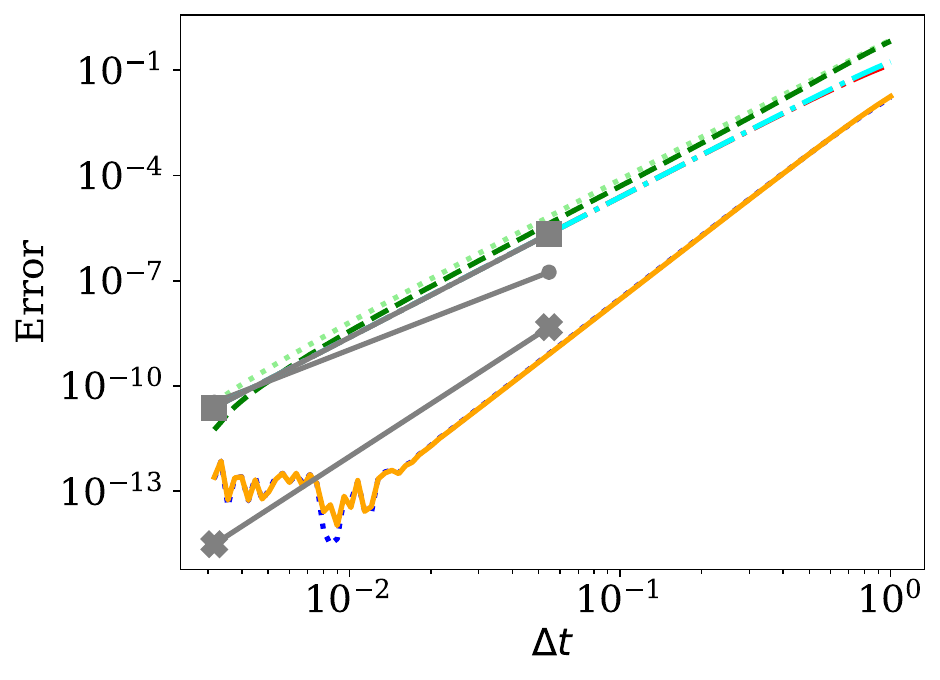}
  \caption{Bounded time dependent oscillator.}
\end{subfigure}%
\hspace{\fill}
\begin{subfigure}{0.48\textwidth}
\centering
\includegraphics[width =\textwidth]{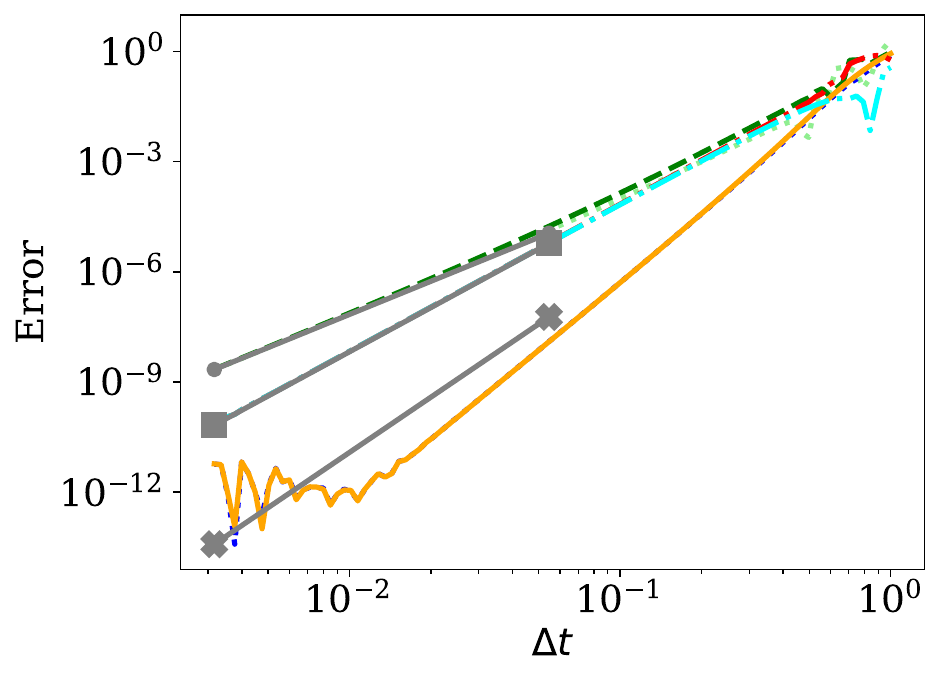}
\caption{Conserved exponential energy.}
\end{subfigure}
\caption{Convergence test for a sample of test problems using different
		 FSAL-R options.}
\label{fig:convergence_fsalr}
\end{figure}

The convergence plots are illustrated in Figure \ref{fig:convergence_fsalr}.
When considering the described oscillator problems we see that all of them show the
superconverge phenomenon regardless of the chosen algorithm BS3 or DP5,
as expected for odd-order methods \cite{ranocha2020relaxationHamiltonian}.
Thus, we also used the classical Runge-Kutta-method of order 4 \cite{kutta1901beitrag}
(short RK4) for comparison, since there is no superconvergence for this
even-order method.
However we observe that this superconvergence is not a general attribute of our FSAL-R method when using
baseline methods of odd order.
This can be seen when looking at the conserved exponential entropy test problem which
converges in the same order as both considered baseline methods.
Nevertheless we can conclude that the approximation of the first stage does not affect the convergence in any negative way.

\begin{figure}[htbp]
\centering
\begin{subfigure}{0.4\textwidth}
\centering
\includegraphics[width = \textwidth]{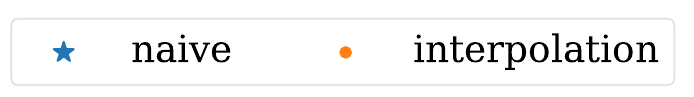}
\end{subfigure}
\\
\begin{subfigure}{0.49\textwidth}
\centering
\includegraphics[width = \textwidth]{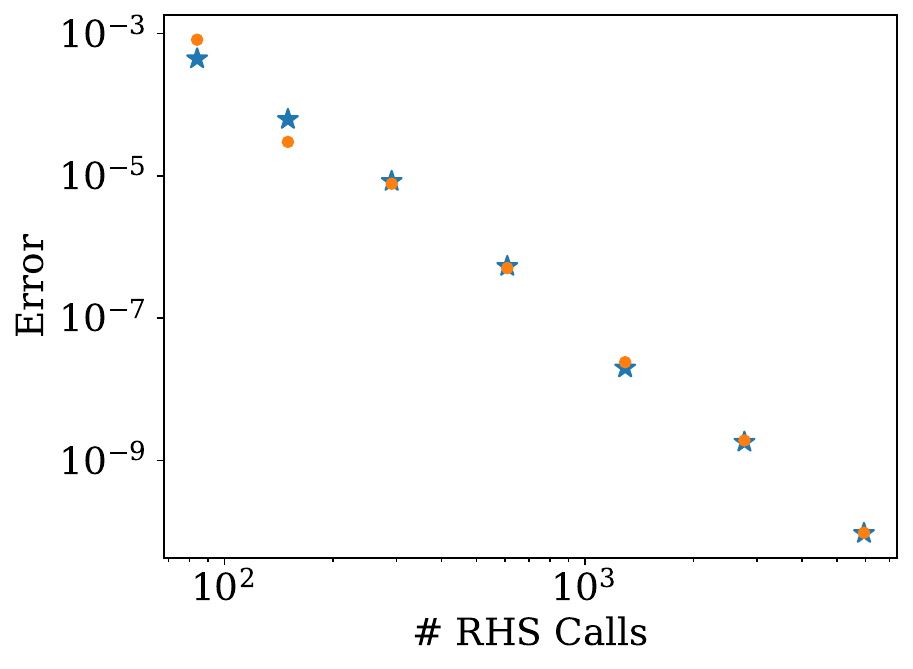}
\caption{BS3, nonlinear pendulum.}
\end{subfigure}
\hspace{\fill}
\begin{subfigure}{0.49\textwidth}
\centering
\includegraphics[width = \textwidth]{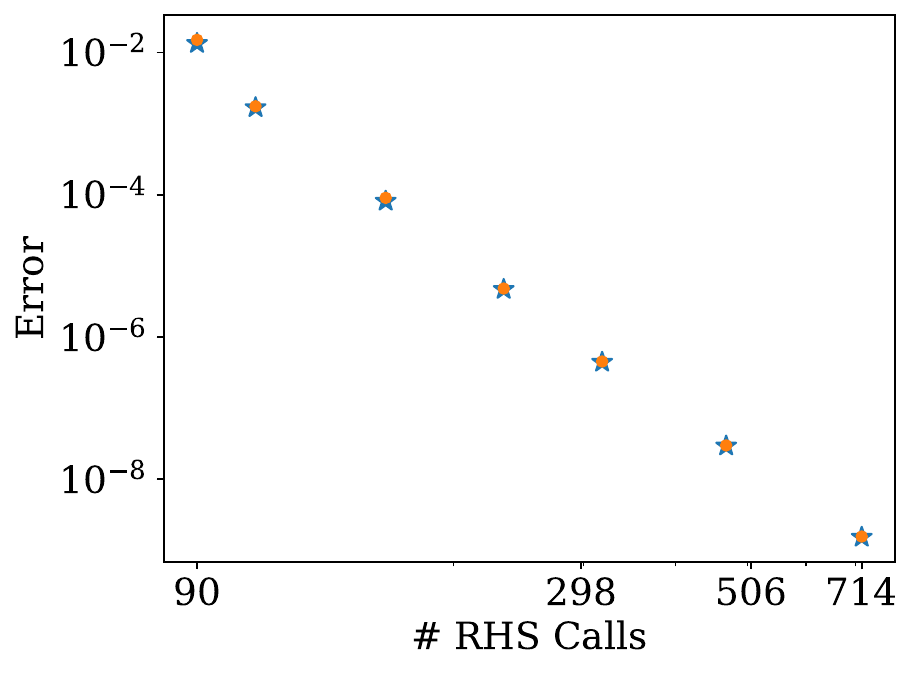}
\caption{DP5, nonlinear pendulum.}
\end{subfigure}
\\
\begin{subfigure}{0.49\textwidth}
\centering
\includegraphics[width = \textwidth]{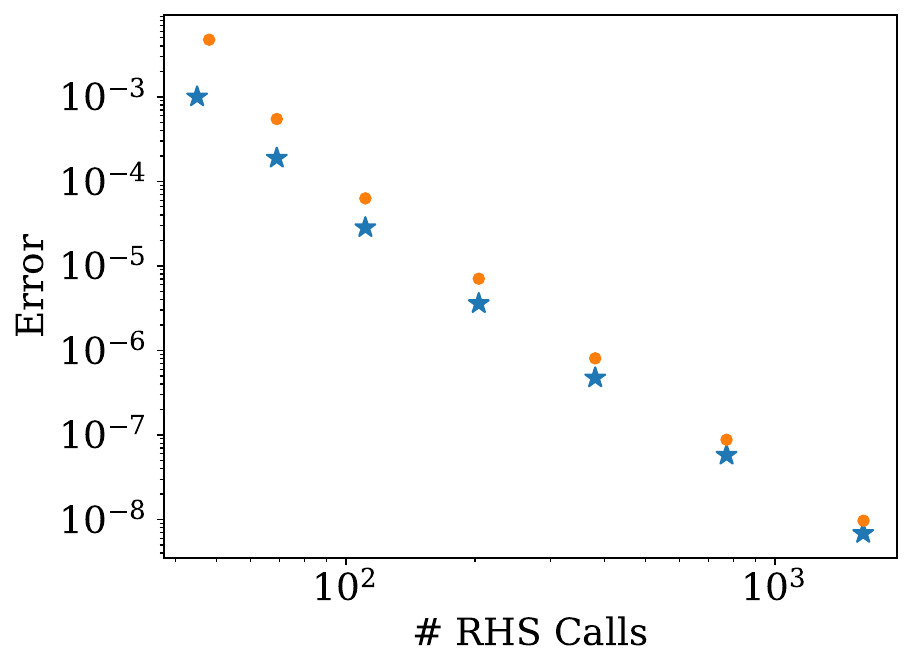}
\caption{BS3, conserved exponential entropy.}
\end{subfigure}
\hspace{\fill}
\begin{subfigure}{0.49\textwidth}
\centering
\includegraphics[width = \textwidth]{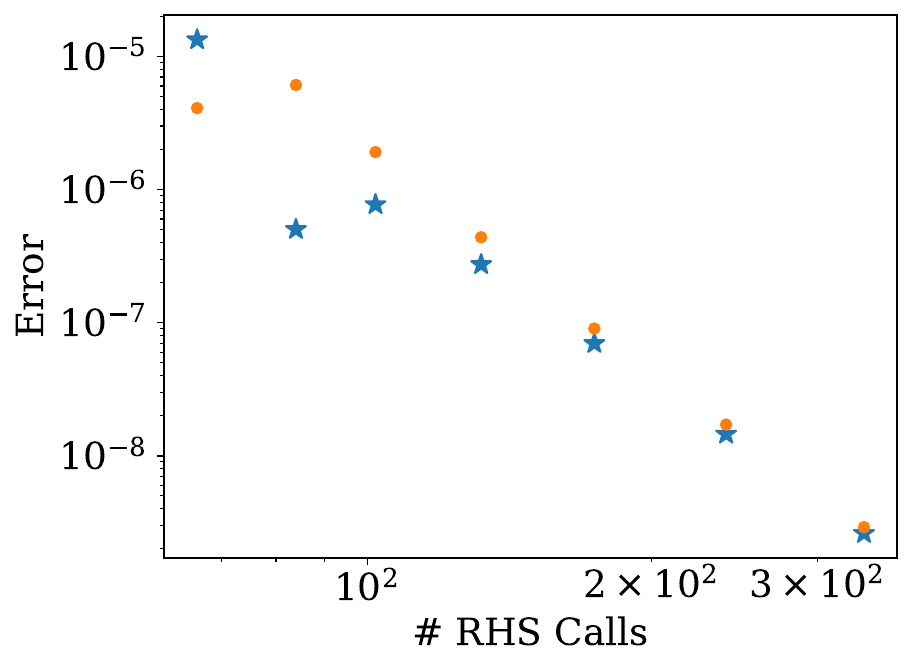}
\caption{DP5, conserved exponential entropy.}
\end{subfigure}
\\
\begin{subfigure}{0.49\textwidth}
\centering
\includegraphics[width = \textwidth]{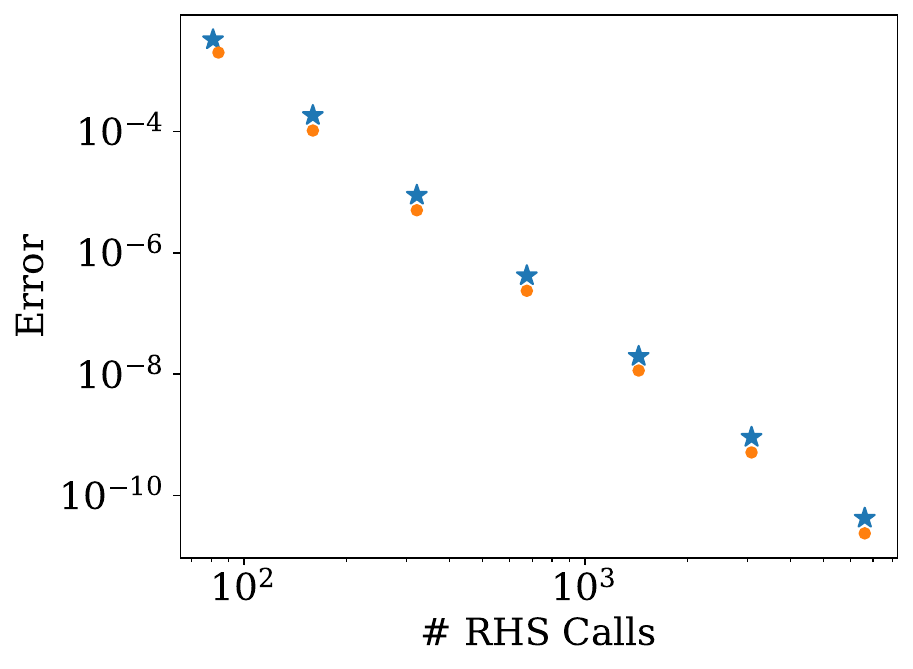}
\caption{BS3, nonlinear oscillator.}
\end{subfigure}
\hspace{\fill}
\begin{subfigure}{0.49\textwidth}
\centering
\includegraphics[width = \textwidth]{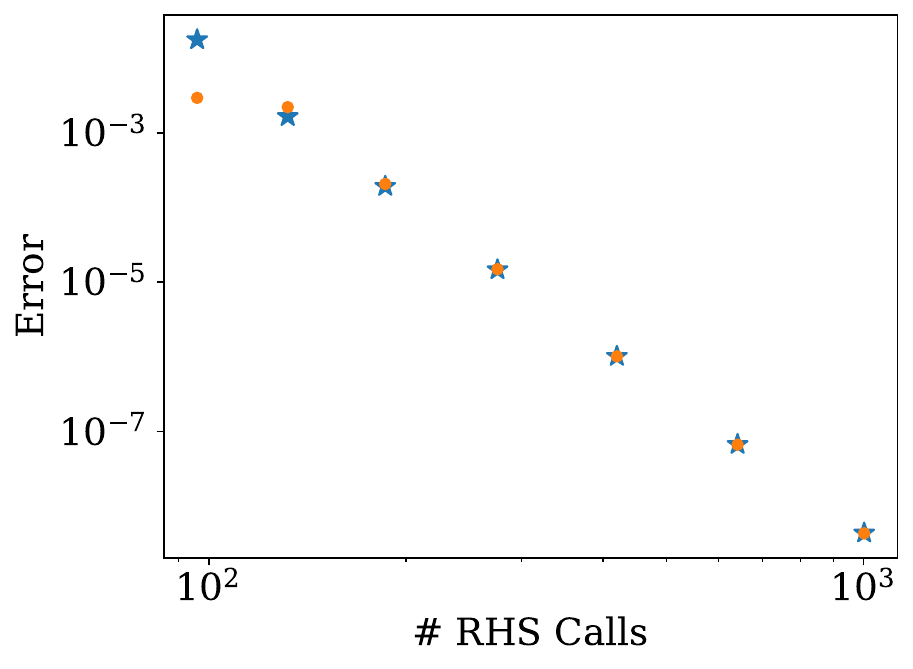}
\caption{DP5, nonlinear oscillator.}
\end{subfigure}
\caption{Work precision diagrams using different FSAL-R options.}
\label{fig:fsalr_comparison}
\end{figure}

When additionally considering step size control we compare the simple approximation with the interpolation approximation.
For the BS3 method we observe all three possible cases, which are illustrated in Figure \ref{fig:fsalr_comparison}.
The first being where there is no difference between the simple and the interpolative method as can be observed when considering  the nonlinear pendulum test problem.
When looking at the conserved exponential entropy or one of the oscillator problems we
see that the simple option is slightly better than the interpolative option when using the BS3
method.
Finally the interpolation option scores better when considering the nonlinear oscillator
problem in case of BS3.
When considering the DP5 method we see that in most cases the accuracy of both options
coincide, even if the
simple option was better than the interpolation option previously. This can be observed in
the harmonic oscillator
problems. In case of the time dependent harmonic oscillator with bounded angular
velocity  the interpolation
option even outperforms the simple option. The only test problem where the simple option
seems to perform better is the conserved exponential entropy.
The difference between the work precision diagrams regarding the BS3 and the DP5 method
can be explained with the respective order of the method.
When considering a lower order method the difference between the simple approximation of
order $\O(\Delta t^{p + 1})$ and the interpolation of order $\O(\Delta t^{p + 2})$ is
more noticeable.
Furthermore since the simple approximation has the same order of approximation as the
baseline method this can lead to a cancellation of errors resulting in a smaller global
error in total.

\subsection{Comparing R-FSAL estimator options}
When discussing the R-FSAL options of the approximations we were especially concerned with the time step $\Delta t$ instead of $\gamma \dt$ regarding the calculation of the error estimators.
When testing the $\Delta t$ options with the time dependent test problem the experiment crashes thus eliminating all the $\Delta t$ options for later comparisons with the other methods.

\begin{figure}[htbp]
\centering
\begin{subfigure}{0.8\textwidth}
\centering
\includegraphics[width = \textwidth]{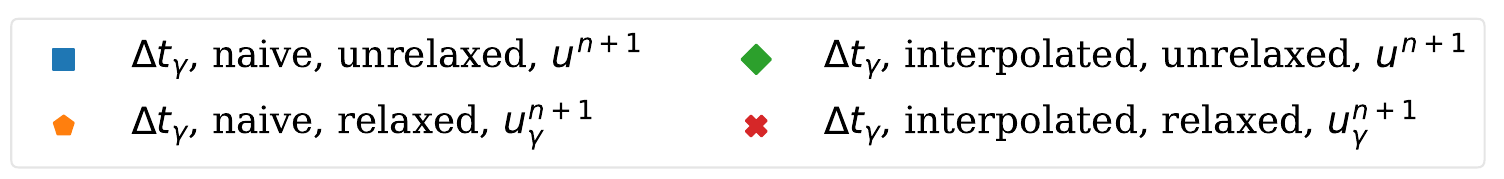}
\end{subfigure}
\\
\begin{subfigure}{0.49\textwidth}
\centering
\includegraphics[width = \textwidth]{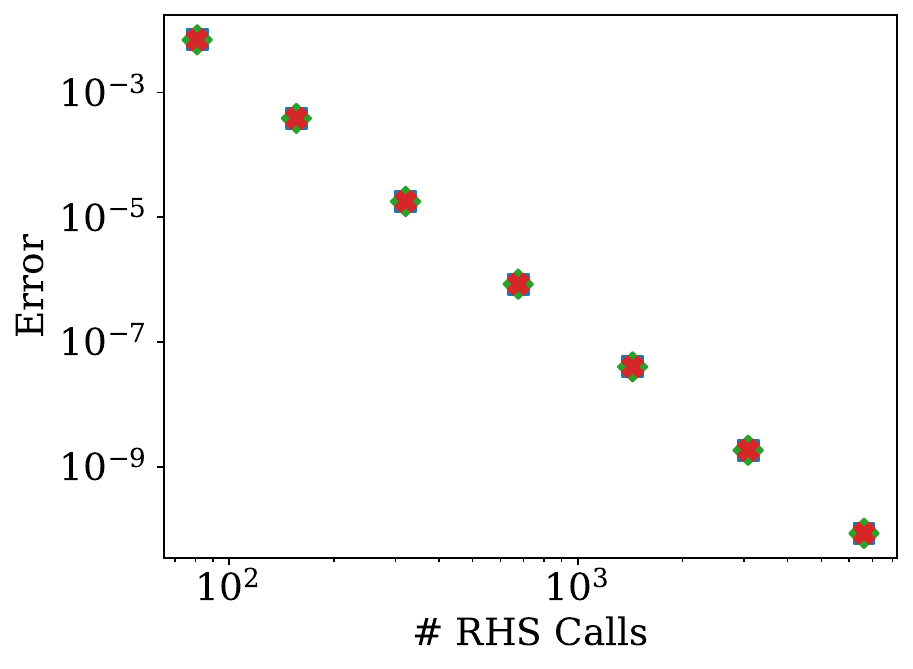}
\caption{BS3, harmonic oscillator.}
\end{subfigure}
\hspace{\fill}
\begin{subfigure}{0.49\textwidth}
\includegraphics[width = \textwidth]{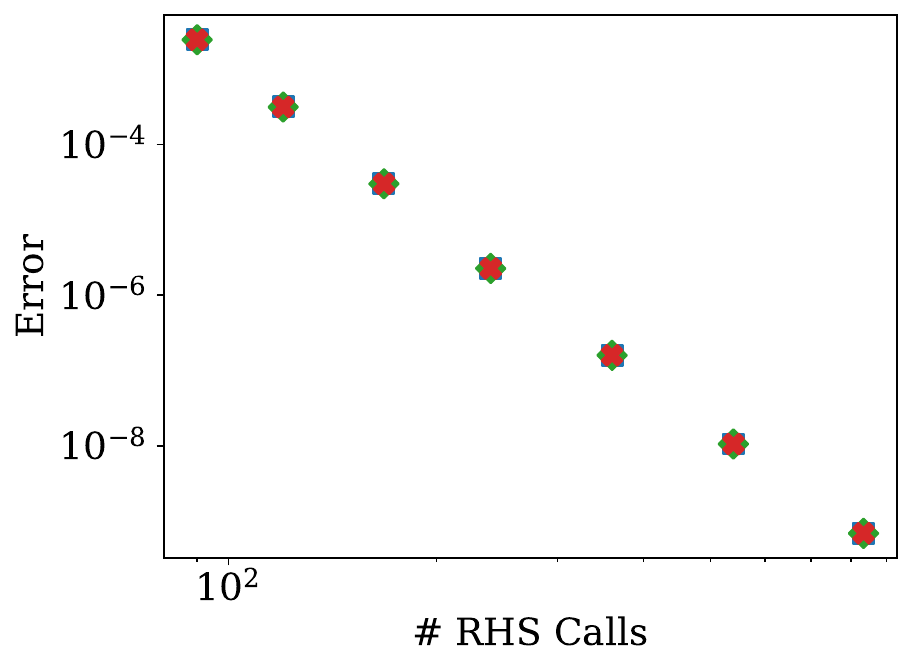}
\caption{DP5, harmonic oscillator.}
\end{subfigure}
\\
\begin{subfigure}{0.49\textwidth}
\includegraphics[width = \textwidth]{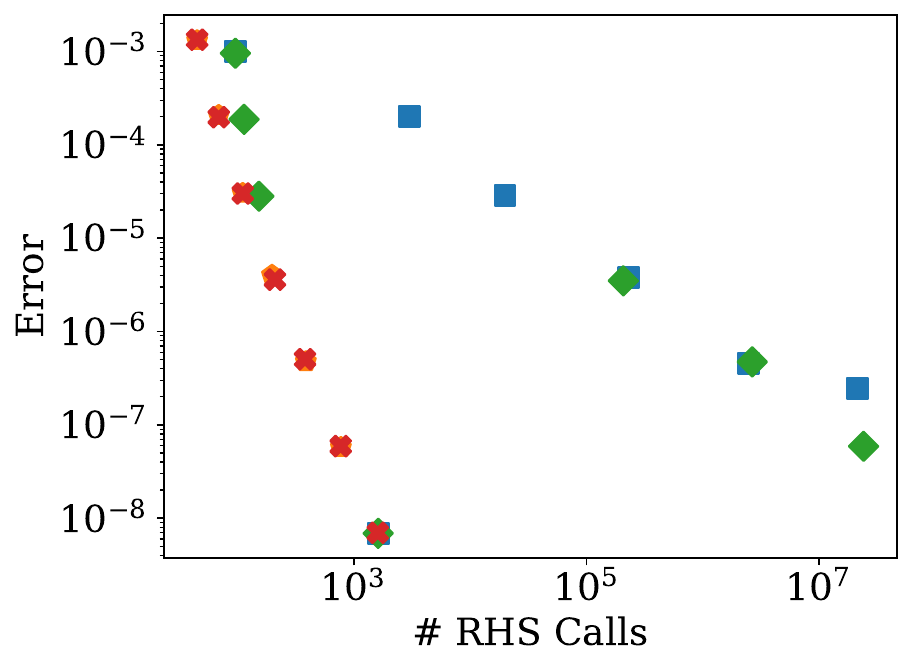}
\caption{BS3, conserved exponential entropy.}
\end{subfigure}
\hspace{\fill}
\begin{subfigure}{0.49\textwidth}
\includegraphics[width = \textwidth]{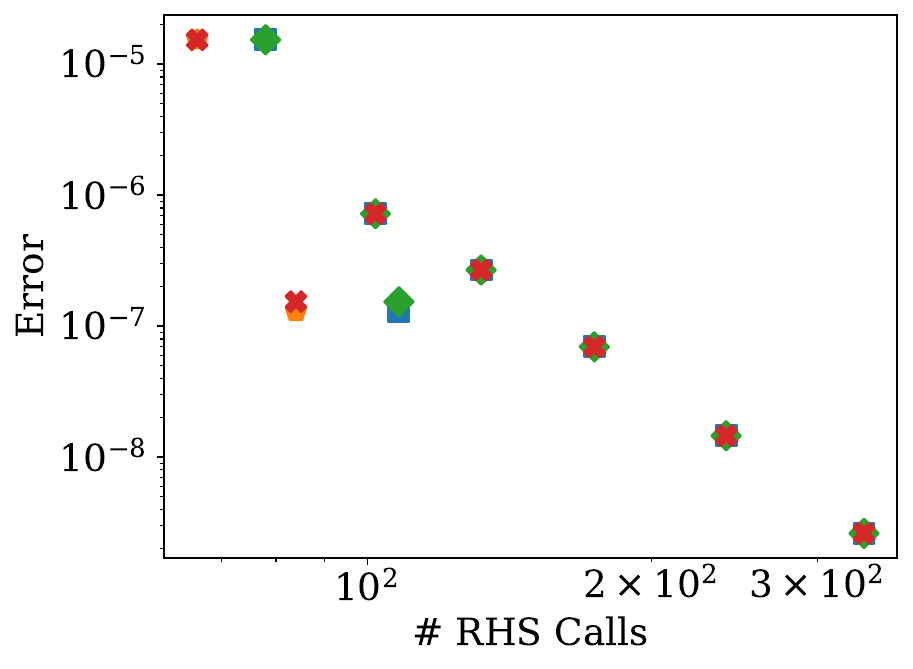}
\caption{DP5, conserved exponential entropy.}
\end{subfigure}
\caption{Work precision diagram for the R-FSAL estimator comparison.}
\label{fig:rfsal_comparison}
\end{figure}

The remaining options seem to be mostly indistinguishable for the other test problems and can be viewed in Figure \ref{fig:rfsal_comparison}. This can be observed, e.g., with the harmonic oscillator test problem.
The only test problem that stands out is the conserved exponential entropy.
In this case the error estimators using $u^{n + 1}$ seem to perform considerably worse in the BS3 case, while in the DP5 case these options seem to be doing fine.
Moving forward we will use the interpolative method with the $\gamma \dt$ time step in tandem with the value $u^{n + 1}_{\gamma}$ for the computation of the error estimator, which corresponds to the fourth option from section \ref{sec:analysis-rfsal}.
Even if there seems to be no noticeable difference when looking at the numerical experiments it still should be the safest option from a theoretical point of view.

\subsection{Step size control stability}

An important property when applying explicit time integration methods with
step size control to mildly stiff problems is step size control stability
\cite[Section~IV.2]{hairer2010solving}. This property has been first studied
by Hall \cite{hall1985equilibrium,hall1986equilibrium} with further
refinements and applications together with Higham
\cite{hall1988analysis,higham1990embedded}, leading to the development
of advanced step size control techniques such as PI or PID controllers
instead of simple I controllers \cite{gustafsson1988pi}. See also
\cite{ranocha2023error,ranocha2021optimized,ranocha2023stability}
for some recent studies in the context of computational fluid dynamics.

\begin{figure}[htpb]
\centering
\begin{subfigure}{0.7\textwidth}
\centering
\includegraphics[width = \textwidth]{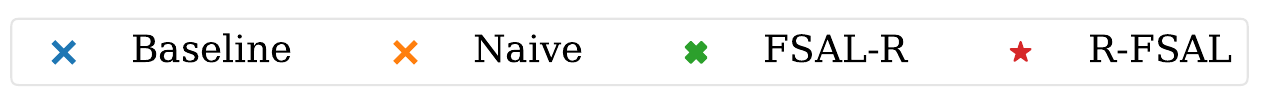}
\end{subfigure}
\\
\begin{subfigure}{0.49\textwidth}
\centering
\includegraphics[width = \textwidth]{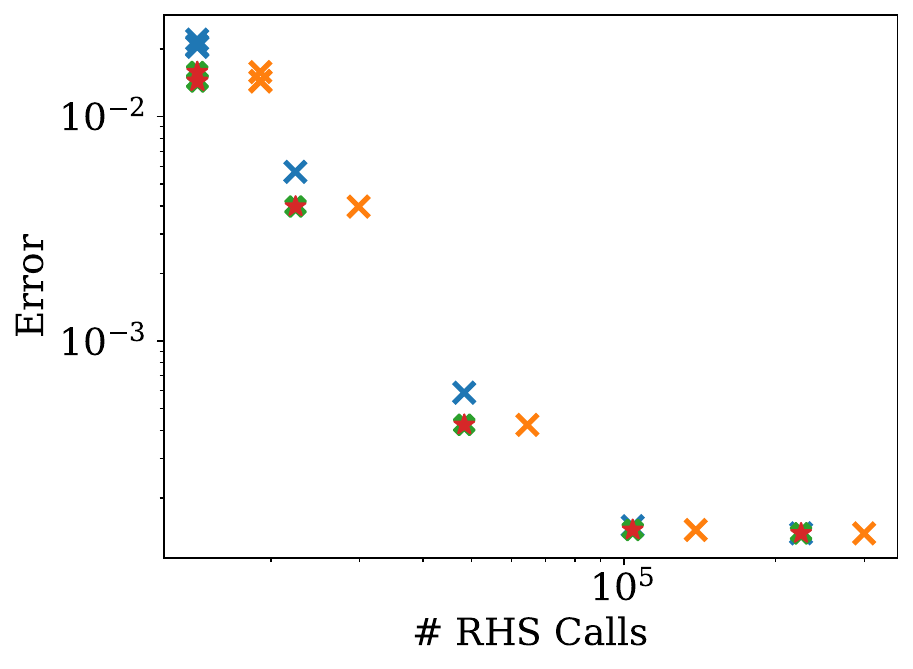}
\caption{BS3.}
\end{subfigure}%
\hspace{\fill}
\begin{subfigure}{0.49\textwidth}
\includegraphics[width = \textwidth]{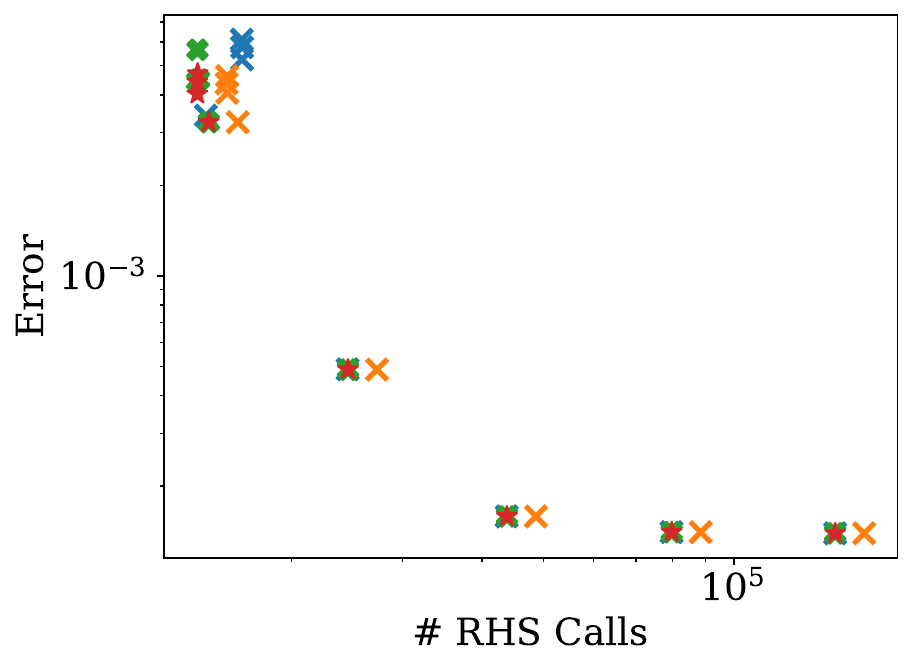}
\caption{RDPK49.}
\end{subfigure}%
\caption{Work precision diagrams for the linear advection equation
         discretized with a high-order DG method in space. The spatial
         semidiscretization is mildly stiff so that we can test the impact
         of the new methods on step size control stability.}
\label{fig:linear_advection}
\end{figure}

Here, we consider the linear advection problem. Due to the high-order DG
discretization, the problem is mildly stiff but still well-suited for explicit
time integration methods. We use BS3 since it works well for computational
fluid dynamics problems \cite{ranocha2021optimized}. We also use a fourth-order
FSAL method RDPK49 of \cite{ranocha2021optimized} that has been optimized for
such discretizations of hyperbolic PDEs.
The results are shown in Figure~\ref{fig:linear_advection}.
We can observe three different regimes. For loose tolerances, the time
step size is limited by stability. Since this combination of method and
controller is step size control stable
\cite{ranocha2021optimized,ranocha2023stability}, it works well and leads
to visually nearly indistinguishable results. For stricter tolerances, the
step size is limited by accuracy, leading to a reduction of the error combined
with an increasing number of right-hand side (RHS) calls. Since we measure the error with respect
to the PDE solution, we finally reach the regime where the error is dominated
by the spatial discretization. Thus, increasing the number of RHS calls does
not improve the error anymore.

In the regimes where the error is dominated by the error in time,
we see that relaxation reduces the error in Figure~\ref{fig:linear_advection}.
Both of our novel methods --- FSAL-R and R-FSAL --- use roughly the same
number of RHS calls as the baseline method while resulting in roughly the
same error as the naive combination of relaxation and step size control.
Thus, we do not overse any negative impact on step size control stability.

\subsection{Comparison with current methods}

For comparison we put the naive, the R-FSAL, the FSAL-R, and the baseline methods side by side in a work
precision diagram in Figure \ref{fig:main_comparison}.

\begin{figure}[htbp]
\centering
\begin{subfigure}{0.7\textwidth}
\centering
\includegraphics[width = \textwidth]{code/Plots_main/Result__BS3_bbm__work_precision_legend.pdf}
\end{subfigure}
\\
\begin{subfigure}{0.49\textwidth}
\centering
\includegraphics[width = \textwidth]{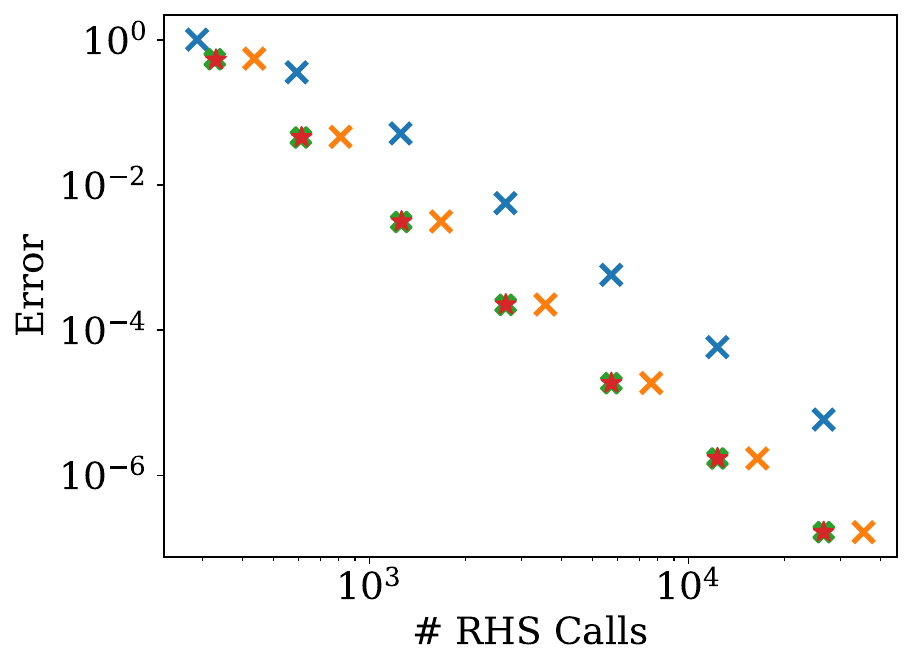}
\caption{BS3, BBM.}
\end{subfigure}
\hspace{\fill}
\begin{subfigure}{0.49\textwidth}
\centering
\includegraphics[width = \textwidth]{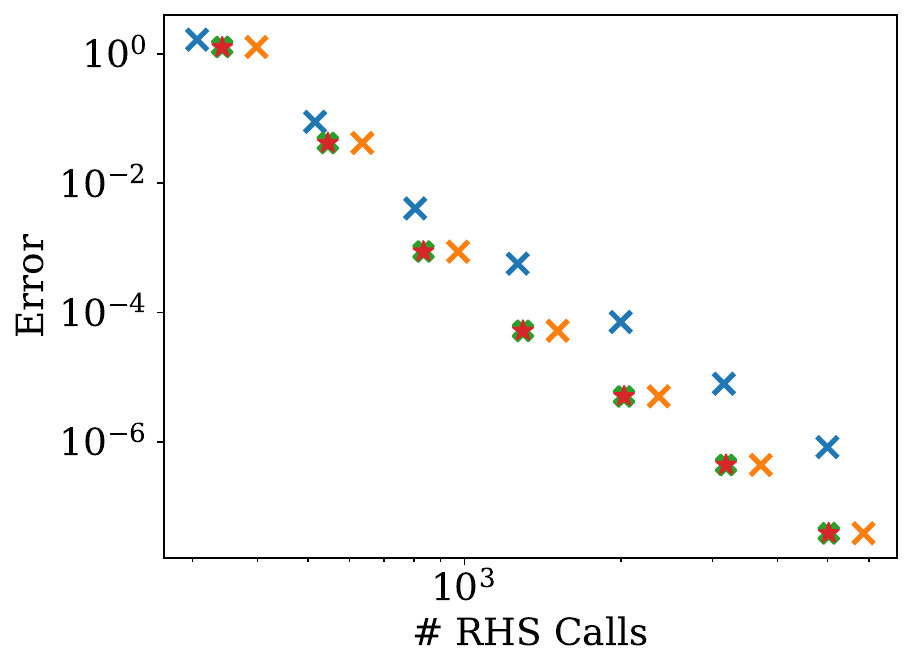}
\caption{DP5, BBM.}
\end{subfigure}
\\
\begin{subfigure}{0.49\textwidth}
\centering
\includegraphics[width = \textwidth]{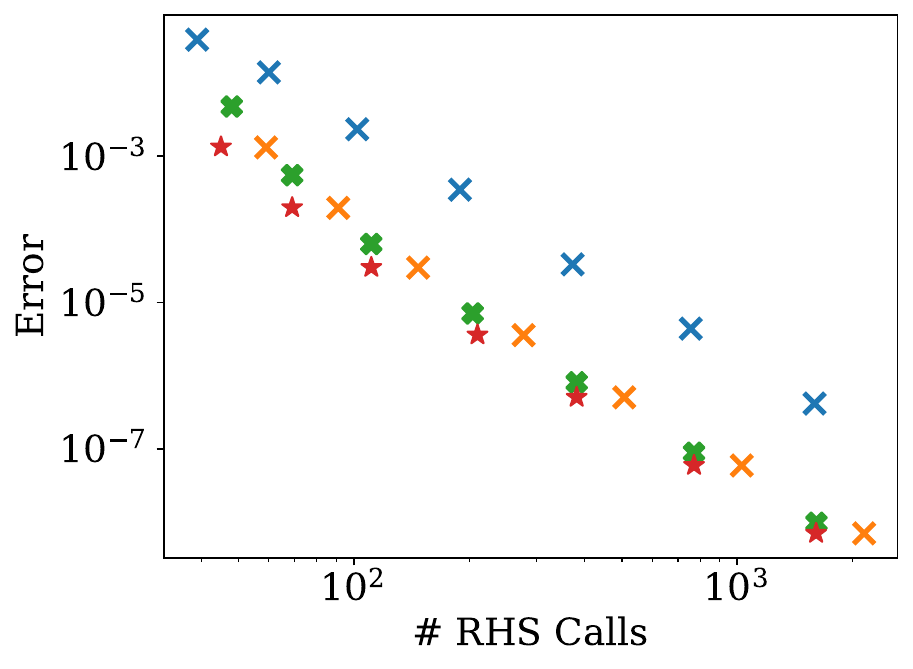}
\caption{BS3, conserved exponential entropy.}
\end{subfigure}
\hspace{\fill}
\begin{subfigure}{0.49\textwidth}
\centering
\includegraphics[width = \textwidth]{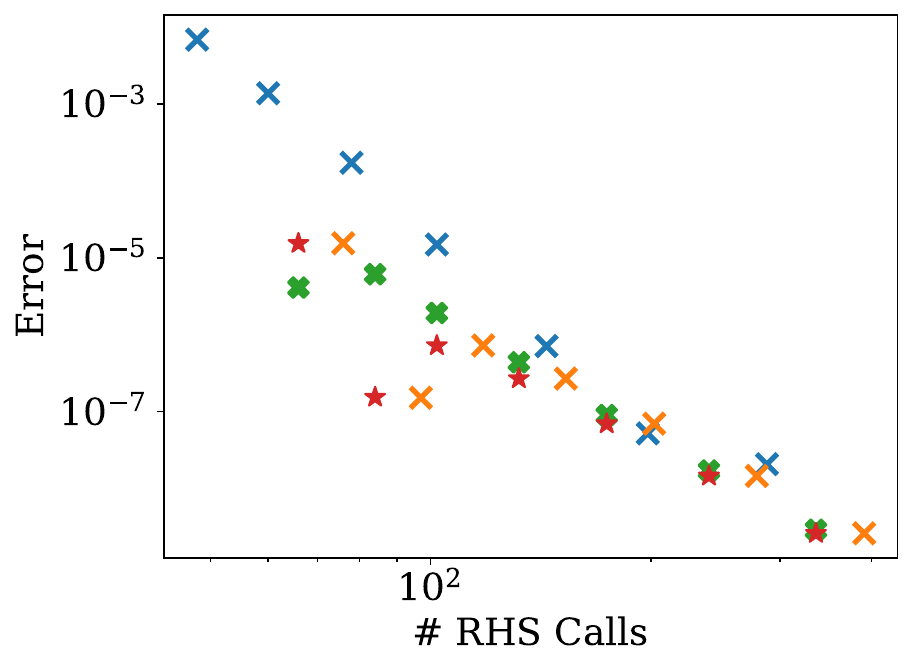}
\caption{DP5, conserved exponential entropy.}
\end{subfigure}
\\
\begin{subfigure}{0.49\textwidth}
\centering
\includegraphics[width = \textwidth]{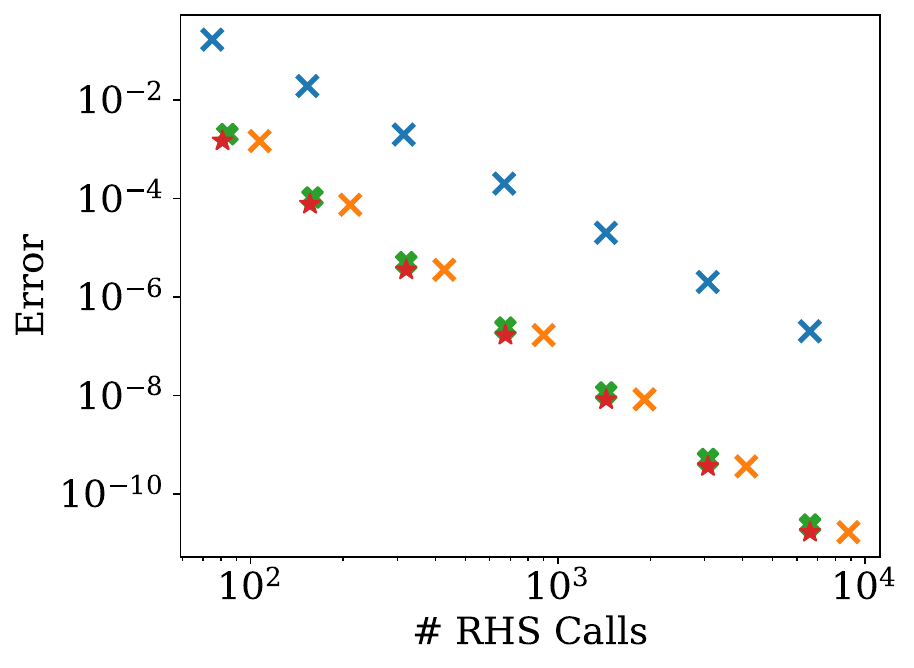}
\caption{BS3, nonlinear oscillator.}
\end{subfigure}
\hspace{\fill}
\begin{subfigure}{0.49\textwidth}
\includegraphics[width = \textwidth]{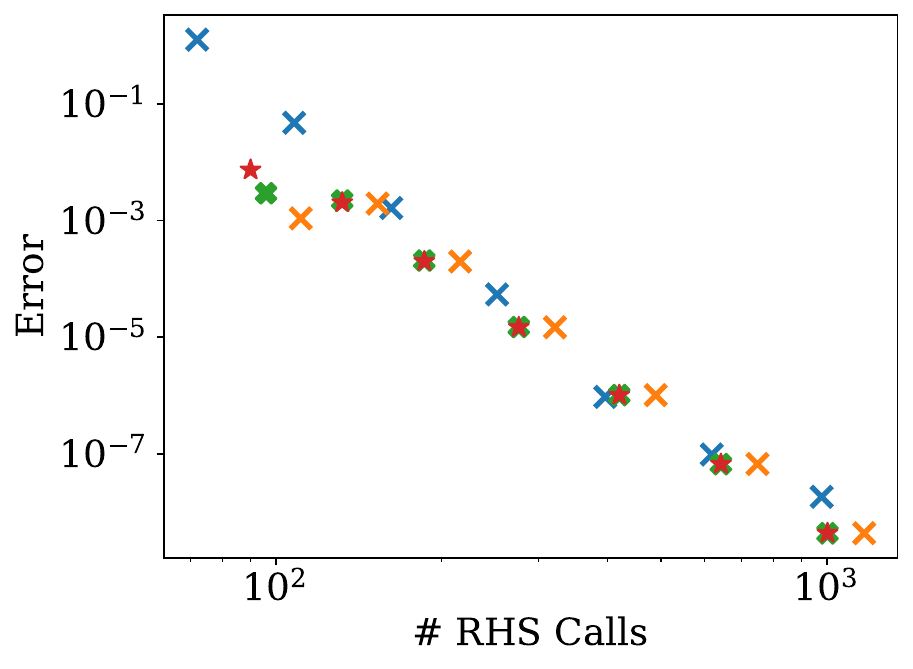}
\caption{DP5, nonlinear oscillator.}
\end{subfigure}
\caption{Work precision diagrams for the comparison of R-FSAL and FSAL-R with the existing methods.}
\label{fig:main_comparison}
\end{figure}

Most of the work precision diagrams show a typical scenario which can be seen for example in the BBM work precision diagram.
The naive method is more accurate than the baseline method but uses more right-hand side
evaluations.
When we look at the FSAL-R and R-FSAL methods they have the same accuracy as the naive method but
at the same time, due to our construction of the methods themselves, they have the same right-
hand-side evaluations as the baseline method, thus keeping the efficiency of the FSAL structure.
When we compare the R-FSAL and the FSAL-R method we see that they mostly produce
the same
accuracy for the same amount of right-hand side evaluations. When considering
BS3, we see that the R-FSAL method scores better than the FSAL-R method in case of the
conserved exponential entropy and nonlinear oscillator.

\section{Summary and conclusions}
\label{sec:summary}
In this work we investigated possible solutions for the efficient combination of relaxation and adaptive time
stepping. We developed two new methods FSAL-R, performing relaxation after step size control and R-FSAL,
performing relaxation before step size control.
In order to keep the efficiency of the baseline FSAL structure we use approximations of the right-hand side values.
In case of FSAL-R we use an approximation for the first stage $f(y^1)$.
On the other hand we use an approximation for the computation of the error estimator $\widehat{u}^{n +1}$ in
case of R-FSAL. We were able to prove that these approximations do not affect the accuracy in a negative way.
Therefore R-FSAL and FSAL-R keep the same accuracy as the naive combination of relaxation and FSAL.

When performing the numerical experiments we considered test problems with a conserved entropy functional.
The methods R-FSAL and FSAL-R are both as efficient as the baseline FSAL-method, while keeping the accuracy
of the naive combination of FSAL and relaxation.
We also observed no negative impact on step size control stability for
high-order discontinuous Galerkin discretizations of conservation laws.
Between FSAL-R and R-FSAL there seems to be no clear winner regarding accuracy and efficiency.
For a practical implementation into a software package the FSAL-R method seems to be more fitting since one does not need to adjust the core structure of the package.
However FSAL-R may run into some issues regarding dissipative problems since we use an approximation of the first stage, thus modifying the baseline method, see \cite{ranocha2020general}.

\section*{Acknowledgments}

We acknowledge support by the Deutsche Forschungsgemeinschaft
(DFG, German Research Foundation, project number 513301895)
and the Daimler und Benz Stiftung (Daimler and Benz foundation,
project number 32-10/22).

\printbibliography

\end{document}